\newcommand{\eql}{\kern-1ex &=& \kern-1ex}
\newcommand{\Z}{\mathbb{Z}}
\newcommand{\R}{\mathbb{R}}
\newcommand{\C}{\mathbb{C}}
\newcommand{\wj}{\mathrm{wj}}
\newcommand{\Ker}{\mathop\mathrm{Ker}\nolimits}
\newcommand{\coKer}{\mathop\mathrm{coKer}\nolimits}
\newcommand{\ind}{\mathop\mathrm{ind}\nolimits}
\newcommand{\sign}{\mathop\mathrm{sign}\nolimits}
\newcommand{\Iso}{\mathrm{Iso}}
\newcommand{\GL}{\mathrm{GL}}
\newcommand{\A}{\mathcal{A}}
\newcommand{\M}{\mathcal{M}}
\newcommand{\N}{\mathcal{N}}
\newcommand{\I}{\mathcal{I}}
\newcommand{\X}{\mathcal{X}}
\newcommand{\K}{\mathcal{K}}
\newcommand{\pb}{\kern -1ex}
\newcommand{\e}{\varepsilon}
\newcommand{\psideg}{\Psi\hbox{-}\mathrm{deg}}
\newcommand{\psidegree}{\Psi\hbox{-}{degree}}
\newcommand{\s}{\sigma}
\newcommand{\per}{\!\times\!}
\renewcommand{\L}{\mathcal{L}}
\renewcommand{\S}{\mathrm{S}}
\renewcommand{\Im}{\mathop\mathrm{Img}\nolimits}
\renewcommand{\l}{\lambda}
\renewcommand{\j}{\mathrm{j}}
\renewcommand{\a}{\alpha}
\renewcommand{\b}{\beta}
\newcommand{\g}{\gamma}
\newcommand{\G}{\Gamma}
\renewcommand{\o}{\omega}
\renewcommand{\O}{\Omega}
\renewcommand{\t}{\theta}
\newcommand{\F}{\mathcal{F}}
\theoremstyle{plain}
\newtheorem{theorem}{Theorem}[section]
\newtheorem{corollary}[theorem]{Corollary}
\newtheorem{lemma}[theorem]{Lemma}
\newtheorem{proposition}[theorem]{Proposition}
\newtheorem{remark}[theorem]{Remark}
\theoremstyle{definition}
\newtheorem{notation}[theorem]{Notation}
\newtheorem{definition}[theorem]{Definition}
\newtheorem{example}[theorem]{Example}
\numberwithin{equation}{section}
\begin{document}
\title[A degree associated to linear eigenvalue problems in Hilbert spaces]{A degree associated to linear eigenvalue problems in Hilbert spaces and applications to nonlinear spectral theory}

\author[P.\ Benevieri]{Pierluigi Benevieri}
\author[A.\ Calamai]{Alessandro Calamai}
\author[M.\ Furi]{Massimo Furi}
\author[M.P.\ Pera]{Maria Patrizia Pera}

\thanks{The first, second and fourth authors are members of the Gruppo Nazionale per l'Analisi Matematica, la Probabilit\`a e le loro Applicazioni (GNAMPA) of the Istituto Nazionale di Alta Matematica (INdAM)}
\thanks{A.\ Calamai is partially supported by GNAMPA\ - INdAM (Italy)}

\date{\today}

\address{Pierluigi Benevieri -
Instituto de Matem\'atica e Estat\'istica,
Universidade de S\~ao Paulo,
Rua do Mat\~ao 1010,
S\~ao Paulo - SP - Brasil - CEP 05508-090 -
 {\it E-mail address: \tt
pluigi@ime.usp.br}}
\address{Alessandro Calamai -
Dipartimento di Ingegneria Civile, Edile e Architettura,
Universit\`a Politecnica delle Marche,
Via Brecce Bianche,
I-60131 Ancona, Italy -
 {\it E-mail address: \tt
calamai@dipmat.univpm.it}}
\address{Massimo Furi - Dipartimento di Matematica e Informatica ``Ulisse Dini'',
Uni\-ver\-sit\`a degli Studi di Firenze,
Via S.\ Marta 3, I-50139 Florence, Italy -
{\it E-mail address: \tt
massimo.furi@unifi.it}}
\address{Maria Patrizia Pera - Dipartimento di Matematica e Informatica ``Ulisse Dini'',
Universit\`a degli Studi di Firenze,
Via S.\ Marta 3, I-50139 Florence, Italy -
{\it E-mail address: \tt
mpatrizia.pera@unifi.it}}

\begin{abstract}
We extend to the infinite dimensional context the link between two completely different topics recently highlighted by the authors: the classical eigenvalue problem for real square matrices and the Brouwer degree for maps between oriented finite dimensional real manifolds.
Thanks to this extension, we solve a conjecture regarding global continuation in nonlinear spectral theory that we have formulated in a recent article.
Our result (the ex conjecture) is applied to prove a Rabinowitz type global continuation property of the solutions to a perturbed motion equation containing an air resistance frictional force.
\end{abstract}

\keywords{eigenvalues, eigenvectors, nonlinear spectral theory, topological degree, bifurcation, differential equations}

\subjclass[2010]{47J10, 47A75, 47H11, 55M25, 34C23}

\dedicatory{Dedicated to the memory of our friend and\\ outstanding mathematician Russell Johnson}

\maketitle

\section{Introduction}
\label{Introduction}

Given a linear operator $L \colon \R^k \to \R^k$, consider the Classical Eigenvalue Problem
\begin{equation}
\label{intro - classical eigenvalue problem}
\begin{cases}
\;L x = \l x,\\[.3ex]
\; x \in \S,
\end{cases}
\end{equation}
where $\S$ is the unit sphere of $\R^k$ and $\l \in \R$.
The solutions of \eqref{intro - classical eigenvalue problem} are pairs $(\l,x) \in \R\per\S$, hereafter called \emph{eigenpoints}, where $\l$ is a real eigenvalue of $L$ and $x$ is one of the corresponding unit eigenvectors.
Since the eigenpoints of \eqref{intro - classical eigenvalue problem} are the zeros of the $C^\infty$-map
\[
\Psi \colon \R\per\S \to \R^k, \quad (\l,x) \mapsto Lx-\l x,
\]
in \cite{BeCaFuPe-s5} we have shown that there is a link between the above purely algebraic problem and the Brouwer degree, $\deg(\Psi,U,0)$, of $\Psi$ with target $0 \in \R^k$ on convenient open subsets $U$ of the cylinder $\R\per\S$, which is a smooth $k$-dimensional real manifold with a natural orientation.

Roughly speaking, in \cite{BeCaFuPe-s5} we have shown that
\begin{itemize}
\item
\emph{if $\l_* \in \R$ is a simple eigenvalue of $L$, and $x_*$ and $-x_*$ are the two corresponding unit eigenvectors, then the ``twin'' eigenpoints $p_* = (\l_*,x_*)$ and $\bar p_* = (\l_*,-x_*)$ are isolated zeros of $\Psi$ and give the same contribution to the Brouwer degree, which is either $1$ or $-1$, depending on the sign jump at $\l_*$ of the (real) characteristic polynomial of $L$.}
\end{itemize}

Still roughly speaking, here we extend this fact to the infinite dimensional case by considering a problem of the type
\begin{equation}
\label{intro - eigenvalue problem in Hilbert spaces}
\begin{cases}
\;L x = \l Cx,\\[.3ex]
\; x \in \S,
\end{cases}
\end{equation}
in which $\l \in \R$, $L$ and $C$ are bounded linear operators acting between two real Hilbert spaces $G$ and $H$, $C$ is compact, $L-\l C$ is invertible for some $\l \in \R$, and $\S$ is the unit sphere of the source space $G$.

Even in the special case $G = H$ and $L$ the identity, Problem \eqref{intro - eigenvalue problem in Hilbert spaces} cannot be treated using the degree of Leray and Schauder which, unlike that of Brouwer, does not hold in the context of smooth manifolds.
Therefore, we apply the degree introduced in \cite{BeFu1} for oriented $C^1$ Fredholm maps of index zero between real differentiable Banach manifolds, which extends the Brouwer degree for maps between oriented finite dimensional smooth manifolds, as well as the Leray-Schauder degree for $C^1$ compact vector fields in Banach spaces.
To apply this degree we need the unit sphere $\S$ to be a smooth manifold.
This is the reason of our restriction to Hilbert spaces instead of the more general Banach environment.

Here the degree regards the smooth map
\[
\Psi \colon \R\per\S \to H, \quad (\l,x) \mapsto Lx-\l Cx,
\]
acting between the $1$-codimensional submanifold $\R\per\S$ of the Hilbert space $\R\per G$ and the target space $H$, whose zeros are called the \emph{eigenpoints} of \eqref{intro - eigenvalue problem in Hilbert spaces}.
The result obtained here, Theorem \ref{psi-degree simple eigenpoint} below, extends, to the infinite dimensional case, the one in \cite{BeCaFuPe-s5} concerning the Classical Eigenvalue Problem \eqref{intro - classical eigenvalue problem}, provided that one calls $\l_* \in \R$ a \emph{simple eigenvalue} of \eqref{intro - eigenvalue problem in Hilbert spaces} if there exists $x_* \in \S$ such that $\Ker(L-\l_*C)= \R x_*$ and $H = \Im(L-\l_*C) \oplus \R Cx_*$.
In fact, we obtain that
\begin{itemize}
\item
\emph{if $\l_*$ is a simple eigenvalue of \eqref{intro - eigenvalue problem in Hilbert spaces} and $x_*$ and $-x_*$ are the two corresponding unit eigenvectors, then the ``twin'' eigenpoints $p_* = (\l_*,x_*)$ and $\bar p_* = (\l_*,-x_*)$ are isolated zeros of $\Psi$ and give the same contribution to the degree, which is either $1$ or $-1$, depending on the orientation of~$\Psi$.}
\end{itemize}

\medskip
As in \cite{BeCaFuPe-s5}, this crucial result regarding the ``fair contribution to the degree of the twin eigenpoints'' is applied to the study of the behaviour of the solution triples $(s,\l,x)$ of the following perturbation of \eqref{intro - eigenvalue problem in Hilbert spaces}:
\begin{equation}
\label{intro - perturbed eigenvalue problem in Hilbert spaces}
\begin{cases}
\;L x + s N(x) = \l Cx,\\[.3ex]
\; x \in \S,
\end{cases}
\end{equation}
in which $N\colon \S \to H$ is a compact $C^1$-map.
Precisely, if we denote by $\Sigma$ the subset of $\R\per\R\per\S$ of the solutions $(s,\l,x)$ of \eqref{intro - perturbed eigenvalue problem in Hilbert spaces} and we call \emph{trivial} those having $s=0$, our main result, Theorem \ref{ex conjecture} below, yields the following Rabinowitz type global continuation result that was conjectured in~\cite{BeCaFuPe-s4}.
\begin{itemize}
\item
\emph{If $q_* = (0,\l_*,x_*)$ is a trivial solution of \eqref{intro - perturbed eigenvalue problem in Hilbert spaces} corresponding to a simple eigenvalue $\l_*$ of the unperturbed problem \eqref{intro - eigenvalue problem in Hilbert spaces}, then the connected component of $\Sigma$ containing $q_*$ is either unbounded or encounters a trivial solution $q^* = (0,\l^*,x^*)$ with $\l^* \not=\l_*$.}
\end{itemize}

The result obtained in \cite{BeCaFuPe-s4} differs from this one in the final assertion and the additional assumption that $G$ and $H$ are separable.
In fact, in \cite{BeCaFuPe-s4}, without any degree theory and with arguments of differential topology, we proved that
\begin{itemize}
\item
\emph{the connected component of $\Sigma$ containing $q_* = (0,\l^*,x^*)$ is either unbounded or encounters a trivial solution $q^* = (0,\l_*,x_*)$ different from $q_*$.}
\end{itemize}
Notice that this assertion does not exclude the case $\l^* = \l_*$ and, consequently, $x^*=-x_*$, since $\l_*$ is simple.

\medskip
We believe that our global continuation result, Theorem \ref{ex conjecture}, could be extended to the more general situation in which $\Ker(L-\l_*C)$ is odd dimensional and
\[
H = \Im(L-\l_*C) \oplus C(\Ker(L-\l_*C)).
\]
Our belief is supported by the fact that in \cite{BeCaFuPe-s2}, under these assumptions, we have shown that
\begin{itemize}
\item
\emph{the projection of $\Sigma$ into the $s\l$-plane has a connected set which contains $(0,\l_*)$ and is either unbounded or includes a ``trivial eigenpair'' $(0,\l^*)$ different from $(0,\l_*)$.}
\end{itemize}
Moreover, our belief is also based on the fact that in \cite{BeCaFuPe-s5}, by means of the Brouwer degree for maps between oriented smooth manifolds, we obtained that
\begin{itemize}
\item
\emph{if $G=H=\R^k$, $C$ is the identity, and $x_*$ is a unit eigenvector of $L$ corresponding to an eigenvalue $\l_*$ with odd algebraic multiplicity, then the connected component of $\Sigma$ containing $(0,\l_*,x_*)$ is either unbounded or includes a trivial solution $(0, \l^*, x^*)$ with $\l^* \not= \l_*$.}
\end{itemize}
So far, our effort to obtain the supposed extension of Theorem \ref{ex conjecture} has proved unsuccessful.

\medskip
Theorem \ref{ex conjecture} falls into the subject of \emph{nonlinear spectral theory},
which finds applications to differential equations
(see e.g.\ \cite{ADV,Chi2018} and references therein).

Here we mention the work of R.\ Chiappinelli \cite{Chi}, which inspired some of our recent articles.
In \cite{Chi} a sort of ``local persistence property'' for a perturbed eigenvalue problem similar to \eqref{intro - perturbed eigenvalue problem in Hilbert spaces} was proved.
Precisely, under the assumptions
\begin{itemize}
\item
$L\colon G \to G$ is a \emph{self-adjoint} operator,
\item
$C = I$ is the identity of $G$,
\item
$N \colon \S \to G$ is Lipschitz continuous,
\item
$\l_* \in \R$ is an isolated simple eigenvalue of $L$,
\item
$x_* \in \S$ is an eigenvector corresponding to $\l_*$,
\end{itemize}
it was shown that
\begin{itemize}
\item
\emph{in a neighborhood $V$ of $0 \in \R$ a Lipschitz continuous function $\e \mapsto (\l_\e,x_\e) \in \R\per\S$ is defined with the properties that $(\l_0,x_0) = (\l_*,x_*)$ and that $Lx_\e + \e N(x_\e) = \l_\e x_\e$ for any $\e \in V$.}
\end{itemize}

When $G$ is infinite dimensional, the hypotheses of our global continuation result seem incompatible with the assumptions of Chiappinelli, due to the fact that the identity is not a compact operator.
However, Theorem \ref{ex conjecture} does apply to the equation
\[
Lx+\e N(x) = \l x,
\]
provided that $N$ is $C^1$ and compact, and $L$ is of the type $\l_* I + C$, with $\l_* \in \R$ and $C$ compact.
In fact, putting $\e= -\s/\mu$ and $\l=\l_*+1/\mu$, the above equation becomes $x + \s N(x) = \mu Cx$, which is as in our problem \eqref{intro - perturbed eigenvalue problem in Hilbert spaces} with $L=I$.

\medskip
For results regarding the local as well as global  persistence property when the eigenvalue $\l_*$ is not necessarily simple we mention \cite{BeCaFuPe-s1,BeCaFuPe-s2,BeCaFuPe-s3,BeCaFuPe-s4,BeCaFuPe-s5, Chi2017, ChFuPe1, ChFuPe2, ChFuPe3, ChFuPe4,ChFuPe5}.

\medskip
The last section of the paper contains three examples illustrating our main result, as well as an application to the study of the solutions $(s,\l,x)$ of the boundary value problem
\begin{equation}
\label{intro - air resistance}
\left\{
\begin{array}{lccc}
x''(t) + s g(x'(t)) + \l x(t) = 0,\\
x(0) = 0 = x(\pi),\; x \in \S,
\end{array}\right.
\end{equation}
in which $\S$ is the unit sphere of the Hilbert space $H^2(0,\pi)$, and $g\colon \R \to \R$ is an odd increasing $C^1$-function (such as the air resistance force $g(v) = v|v|$).
From our result, with the help of the well-known notion of winding number of a self-map of the circle $S^1$, we deduce that, given any trivial solution $q_*$ of \eqref{intro - air resistance}, the connected component of $\Sigma$ containing $q_*$ is unbounded and does not encounter other trivial solutions.

For pioneering articles regarding the use of the winding number in order to study the behavior of solutions to ordinary differential equations we mention \cite{CaMaZa,FuPe1,FuPe2}.

\section{Notation and preliminaries}
\label{Preliminaries}

We introduce some notation, preliminaries, and known or unknown concepts that we will need in subsequent sections.
In particular, we will outline the main notions related to the topological degree for oriented $C^1$ Fredholm maps of index zero between real differentiable Banach manifolds introduced in \cite{BeFu1} (see also \cite{BeFu2,BeFu5} for additional details).
Actually, some notions and results are new: we consider them necessary for a better understanding of the topics in Sections \ref{Results 1} and \ref{Results 2}, the proof of Theorem \ref{psi-degree simple eigenpoint} in particular.

\subsection{Algebraic preliminaries}
\label{Algebraic preliminaries}
Let, hereafter, $E$ and $F$ be two real vector spaces.
By $\L(E,F)$ we shall denote the vector space of the linear operators from $E$ into $F$. The same notation will be used if, in addition, $E$ and $F$ are normed. In this case, however, we will tacitly assume that all the operators of $\L(E,F)$ are bounded, and that this space is endowed with the usual operator norm. If $E=F$, we will write $\L(E)$ instead of $\L(E,E)$.
By $\mathrm{Iso}(E,F)$ we shall mean the subset of $\L(E,F)$ of the invertible operators, and we will write $\GL(E)$ instead of $\Iso(E,E)$.
The subspace of $\L(E,F)$ of the operators having finite dimensional image will be denoted by $\F(E,F)$, or simply by $\F(E)$ when $E=F$.

\medskip
Let $I \in \L(E)$ indicate the identity of $E$.
If $T \in \L(E)$ has the property that $I-T \in \F(E)$, we shall say that $T$ is an \emph{admissible operator (for the determinant)}.
The symbol $\A(E)$ will stand for the affine subspace of $\L(E)$ of the admissible operators.

It is known (see \cite{Ka}) that the determinant of an operator $T \in \A(E)$ is well defined as follows: $\det T := \det T|_{\hat E}$, where $T|_{\hat E}$ is the restriction (as domain and as codomain) to any finite dimensional subspace $\hat E$ of $E$ containing $\Im(I-T)$, with the understanding that $\det T|_{\hat E} = 1$ if $\hat E = \{0\}$.

As one can easily check, the function $\det\colon \A(E) \to \R$ inherits most of the properties of the classical determinant.
Some of them are stated in the following

\begin{remark}
\label{properties of determinant}
Let $T, T_1, T_2 \in \A(E)$. Then
\begin{itemize}
\item
$\det T \not= 0$ if and only if $T$ is invertible;
\item
$R\in \Iso(E,F)$ implies $RTR^{-1} \in \A(F)$ and $\det(RTR^{-1}) = \det T$;
\item
$T_2T_1 \in \A(E)$ and $\det(T_2T_1) = \det(T_2)\det(T_1)$.
\end{itemize}
\end{remark}

See, for example, \cite{BeFuPeSp07} for a discussion about other properties of the determinant.

\medskip
We will need the following remark, whose easy proof is left to the reader:

\begin{remark}
\label{determinant}
Let $T \in \L(E)$ and let $E = E_1 \oplus E_2$ with $\dim E_2 < +\infty$.
Assume that, with respect to the above decomposition, $T$ can be represented in a block matrix form
\[
T =
\left(
\begin{array}{cc}
I_{11} & T_{12}\\[2ex]
0 & T_{22}
\end{array}
\right),
\]
where $I_{11}$ is the identity of $E_1$.
Then $T \in \A(E)$ and $\det T = \det T_{22}$.
\end{remark}

Recall that an operator $T \in \L(E,F)$ is said to be \emph{(algebraic) Fredholm} if its kernel, $\Ker T$, and its cokernel, $\coKer T = F/T(E)$, are both finite dimensional.

The \emph{index} of a Fredholm operator $T$ is the integer
\[
\ind T = \dim(\Ker T) - \dim(\coKer T).
\]
In particular, any invertible linear operator is Fredholm of index zero.
Observe also that, if $T \in \L(\R^k,\R^s)$, then $\ind T = k-s$.

The subset of $\L(E,F)$ of the Fredholm operators will be denoted by $\Phi(E,F)$; while $\Phi_n(E,F)$ will stand for the set $\{T \in \Phi(E,F): \ind T = n\}$.
Obviously, $\Phi(E)$ and $\Phi_n(E)$ designate, respectively, $\Phi(E,E)$ and $\Phi_n(E,E)$.

\medskip
One can easily check that $\A(E)$ is a subset of $\Phi_0(E)$.
This is also a consequence of a well known property regarding Fredholm operators.
Namely,
\begin{itemize}
\item
[(1)] \emph{if $T \in \Phi_n(E,F)$ and $K \in \F(E,F)$, then $T+K \in \Phi_n(E,F)$.}
\end{itemize}

Another fundamental property states that
\begin{itemize}
\item
[(2)] \emph{the composition of Fredholm operators is Fredholm and its index is the sum of the indices of all the composite operators.}
\end{itemize}

An useful consequence of property (2) is the following:
\begin{itemize}
\item
If $T \in \Phi_n(E,F)$ and $k \in \mathbb N$, then the restriction of $T$ to a $k$-codimensional subspace of $E$ is Fredholm of index $n-k$.
\end{itemize}

\medskip
Let $T \in \Phi_0(E,F)$.
In \cite{BeFu1}, an operator $K \in \F(E,F)$ was called a \emph{corrector of $T$} if $T+K$ is invertible.
Since, during a conference, someone has critically observed that it is not necessary to correct an invertible operator, hereafter we will use the more appropriate word \emph{companion} instead of \emph{corrector}.

Notice that any $T \in \Iso(E,F)$ has a \emph{natural companion}: the trivial element of $\L(E,F)$.
This fact was crucial in \cite{BeFu1} for the construction of the degree theory that we will apply here.

Given $T \in \Phi_0(E,F)$, let us denote by $\mathcal C(T)$ the subset of $\F(E,F)$ of all the companions of $T$.
As one can easily check, this set is nonempty.
Moreover, $\mathcal C(T)$ admits a partition in two equivalence classes according to the following

\begin{definition}[Equivalence relation]
\label{equivalence relation}
Two companions $K_1$ and $K_2$ of an operator $T \in \Phi_0(E,F)$ are \emph{equivalent} (more precisely, \emph{$T$-equivalent)} if the admissible operator $(T+K_2)^{-1}(T+K_1)$ has positive determinant.
\end{definition}

Given two companions $K_1$ and $K_2$ of $T \in \Phi_0(E,F)$, the admissible automorphism $(T+K_2)^{-1}(T+K_1)$ is not the unique one that can be used to check whether or not $K_1$ and $K_2$ are equivalent.
In fact, one has the following

\begin{remark}
\label{four determinants}
Let $T \in \Phi_0(E,F)$ and $K_1, K_2 \in \mathcal C(T)$.
Then, the determinants of the invertible operators
\[
(T+K_2)^{-1}(T+K_1),\; (T+K_1)(T+K_2)^{-1},\; (T+K_1)^{-1}(T+K_2),\; (T+K_2)(T+K_1)^{-1}
\]
have the same sign.
In fact, from the second property of Remark \ref{properties of determinant} one gets that the first two operators have the same determinant,
while the third property implies the statement regarding the last two operators, being the inverses of the first two.
\end{remark}

Thanks to the above equivalence relation, the following definition was introduced in \cite{BeFu1}.

\begin{definition}[Orientation]
\label{orientation of T}
An \emph{orientation} of $T \in \Phi_0(E,F)$ is one of the two equivalence classes of $\mathcal C(T)$, denoted by $\mathcal C_+(T)$ and called the class of \emph{positive companions} of the \emph{oriented operator} $T$.
The set $\mathcal C_-(T) = \mathcal C(T) \setminus C_+(T)$ of the \emph{negative companions} is the \emph{opposite orientation of $T$}.
\end{definition}

Some further definitions are in order.

\begin{definition}[Natural orientation]
\label{natural orientation}
Any $T \in \Iso(E,F)$ admits the \emph{natural orientation}: the one given by considering the trivial operator of $\L(E,F)$ as a positive companion.
\end{definition}

\begin{definition}[Canonical orientation]
\label{canonical orientation}
Any admissible operator $T \in \A(E)$ admits the \emph{canonical orientation}: the one given by choosing as a positive companion any $K \in \F(E)$ such that $\det(T+K) > 0$.
In particular, this applies for any $T \in \L(E)$, with $\dim E < \infty$.
\end{definition}

\begin{definition}[Associated orientation]
\label{associated orientation}
Let $E$ and $F$ have the same finite dimension. Assume that they are oriented up to an inversion of both the orientations or, equivalently, assume that $E\per F$ has an orientation, say $\mathcal O$.
Then any $T \in \L(E,F)$ admits the orientation \emph{associated with $\mathcal O$}: the one given by choosing as a positive companion any $K \in \F(E,F)$ such that $T+K$ is orientation preserving.
\end{definition}

\begin{definition}[Oriented composition]
\label{oriented composition}
The \emph{oriented composition} of two oriented operators, $T_1 \in \Phi_0(E_1,E_2)$ and $T_2 \in \Phi_0(E_2,E_3)$, is the operator $T_2T_1$ with the orientation given by considering $K = (T_2+K_2)(T_1+K_1)-T_2T_1$ as a positive companion, where $K_1$ and $K_2$ are positive companions of $T_1$ and $T_2$, respectively.
\end{definition}

Observe that the oriented composition is associative.
Indeed, if $T_1 \in \Phi_0(E_1,E_2)$, $T_2 \in \Phi_0(E_2,E_3)$ and $T_3 \in \Phi_0(E_3,E_4)$, and $K_1$, $K_2$ and $K_3$ are, respectively, companions of $T_1$, $T_2$ and $T_3$, one has
\begin{align*}
\big((T_3+K_3)(T_2+K_2)\big)(T_1+K_1)-\big(T_3T_2\big)T_1\;\\
=(T_3+K_3)\big((T_2+K_2)(T_1+K_1)\big)-T_3\big(T_2T_1\big).
\end{align*}

The following result implies an important property of the oriented composition (see Corollary \ref{sign of composition} below). Moreover, it shows that Definition \ref{oriented composition} is well-posed.

\begin{lemma}
\label{inversion in an oriented composition}
Given $T_1 \in \Phi_0(E_1,E_2)$, $T_2 \in \Phi_0(E_2,E_3)$, $K_1, K_1' \in \mathcal C(T_1)$ and $K_2, K_2' \in \mathcal C(T_2)$, consider the following companions of $T_2T_1$:
\[
K = (T_2+K_2)(T_1+K_1)-T_2T_1 \quad \text{and} \quad K' = (T_2+K_2')(T_1+K_1')-T_2T_1.
\]
Then $K$ is equivalent to $K'$ if and only if $K_1$ and $K_2$ are both equivalent or both not equivalent to $K_1'$ and $K_2'$, respectively.
\end{lemma}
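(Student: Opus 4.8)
The plan is to reduce the statement to a single determinant computation. First I would check that $K$ and $K'$ really are companions of $T_2T_1$: since $K = T_2K_1 + K_2T_1 + K_2K_1$, each summand has finite-dimensional image, so $K \in \F(E_1,E_3)$, and $T_2T_1 + K = (T_2+K_2)(T_1+K_1)$ is a composition of invertible operators, hence invertible; the same holds for $K'$. Throughout, write $A_i = T_i + K_i$ and $A_i' = T_i + K_i'$ for $i=1,2$, so that all four are invertible.

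By Definition \ref{equivalence relation}, $K$ is $T_2T_1$-equivalent to $K'$ precisely when $\det\big((T_2T_1+K')^{-1}(T_2T_1+K)\big) > 0$. Now
\[
(T_2T_1+K')^{-1}(T_2T_1+K) = (A_2'A_1')^{-1}(A_2A_1) = (A_1')^{-1}\big[(A_2')^{-1}A_2\big]A_1,
\]
which is an admissible automorphism of $E_1$ (one checks that $I_{E_1}$ minus it has finite rank). I would then conjugate by $A_1 \in \Iso(E_1,E_2)$ and use the conjugation-invariance bullet of Remark \ref{properties of determinant} to get
\[
\det\big((A_1')^{-1}\big[(A_2')^{-1}A_2\big]A_1\big) = \det\big(A_1(A_1')^{-1}(A_2')^{-1}A_2\big),
\]
the right-hand operator now being an admissible operator of the single space $E_2$. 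Its two factors $A_1(A_1')^{-1}$ and $(A_2')^{-1}A_2$ are themselves in $\A(E_2)$, since $I_{E_2}-A_1(A_1')^{-1} = (K_1'-K_1)(A_1')^{-1}$ and $I_{E_2}-(A_2')^{-1}A_2 = (A_2')^{-1}(K_2'-K_2)$ are finite rank; hence the multiplicativity of $\det$ on $\A(E_2)$ (third bullet of Remark \ref{properties of determinant}) yields
\[
\det\big((T_2T_1+K')^{-1}(T_2T_1+K)\big) = \det\big(A_1(A_1')^{-1}\big)\cdot\det\big((A_2')^{-1}A_2\big).
\]

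To conclude, I would identify the sign of each factor. By Remark \ref{four determinants}, $\det\big(A_1(A_1')^{-1}\big) = \det\big((T_1+K_1)(T_1+K_1')^{-1}\big)$ has the same sign as $\det\big((T_1+K_1')^{-1}(T_1+K_1)\big)$, which by Definition \ref{equivalence relation} is positive if and only if $K_1$ and $K_1'$ are $T_1$-equivalent; similarly $\det\big((A_2')^{-1}A_2\big) = \det\big((T_2+K_2')^{-1}(T_2+K_2)\big)$ is positive if and only if $K_2$ and $K_2'$ are $T_2$-equivalent. Therefore the product above is positive exactly when these two equivalence relations have the same truth value, i.e.\ when $K_1$ and $K_2$ are both equivalent or both not equivalent to $K_1'$ and $K_2'$, respectively, which is the asserted criterion for $K \sim K'$.

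The only delicate point, and the main place where care is needed, is the bookkeeping of admissibility: at each algebraic manipulation one must verify that the operator in play differs by a finite-rank operator from the identity of the \emph{correct} space, so that $\det$ is defined and the properties in Remark \ref{properties of determinant} genuinely apply; there is no conceptual obstruction here. (One could replace the conjugation step by a Sylvester-type determinant identity, but the argument above uses only tools already set up in the excerpt.)
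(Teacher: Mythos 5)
Your proof is correct and follows essentially the same route as the paper: rewrite $(T_2T_1+K')^{-1}(T_2T_1+K)$ as a product of the four factors, conjugate by an invertible operator to collect the $T_1$-factors and the $T_2$-factors (the paper conjugates by $T_1+K_1'$, you by $T_1+K_1$; the roles of $K$ and $K'$ are also swapped, both immaterial by Remark \ref{four determinants}), and then apply multiplicativity of the determinant on admissible operators to split the sign into the two factors governing $T_1$- and $T_2$-equivalence. Your extra bookkeeping on admissibility and on $K,K'$ actually being companions is a welcome addition that the paper leaves implicit.
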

\begin{proof}
According to Definition \ref{orientation of T}, we need to compute the sign of
\[
 \det\big((T_2T_1+K)^{-1}(T_2T_1+K')\big).
\]
We have
\begin{align*}
(T_2T_1+K)^{-1}(T_2T_1+K')
= \big((T_2+K_2)(T_1+K_1)\big)^{-1}\big((T_2+K_2')(T_1+K_1')\big).
\end{align*}
Thus, because of the second property in Remark \ref{properties of determinant}, we obtain
\begin{align*}
&\;\det\big((T_2T_1+K)^{-1}(T_2T_1+K')\big)\\
= &\; \det\Big((T_1+K_1')\big((T_2+K_2)(T_1+K_1)\big)^{-1}\big((T_2+K_2')(T_1+K_1')\big)(T_1+K_1')^{-1}\Big)\\
= &\; \det\big((T_1+K_1')(T_1+K_1)^{-1}(T_2+K_2)^{-1}(T_2+K_2')\big).
\end{align*}
Therefore, applying the third property of the same remark, we get
\begin{align*}
&\;\det\big((T_2T_1+K)^{-1}(T_2T_1+K')\big)\\
= &\; \det\big((T_1+K_1')(T_1+K_1)^{-1})\big)\det\big((T_2+K_2)^{-1}(T_2+K_2')\big),
\end{align*}
and the assertion follows.
\end{proof}

\begin{definition}[Sign of an oriented operator]
\label{sign}
Let $T \in \Phi_0(E,F)$ be an oriented operator.
Its \emph{sign} is the integer
\[
\sign T =
\left\{
\begin{array}{rl}
+1 & \mbox{if } T \mbox{ is invertible and naturally oriented,}\\
-1 & \mbox{if } T \mbox{ is invertible and not naturally oriented,}\\
 0 & \mbox{if } T \mbox{ is not invertible.}
\end{array}
\right.
\]
\end{definition}

As a straightforward consequence of Remark \ref{four determinants}, and taking into account Definitions \ref{equivalence relation}, \ref{orientation of T}, \ref{natural orientation}, \ref{sign},
we get the following

\begin{remark}
\label{sign test}
Let $T \in \Iso(E,F)$ be oriented.
Then,
\begin{align*}
\sign T &= \sign \det\big((T+K)^{-1}T\big) = \sign \det\big(T(T+K)^{-1}\big)\\
&= \sign \det\big(T^{-1}(T+K)\big) = \sign \det\big((T+K)T^{-1}\big),
\end{align*}
where $K$ is a positive companion of $T$.
\end{remark}

\medskip
Lemma \ref{inversion in an oriented composition} shows that, in the oriented composition, the inversion of the orientation of one (and only one) of the operators yields the inversion of the orientation of the composition.
Hence, one gets the following

\begin{corollary}
\label{sign of composition}
Let $T_1 \in \Phi_0(E_1,E_2)$ and $T_2 \in \Phi_0(E_2,E_3)$ be oriented.
Then, $\sign(T_2T_1) = \sign T_2 \sign T_1$, where $T_2T_1$ is the oriented composition of $T_1$ and $T_2$.
\end{corollary}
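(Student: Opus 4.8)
The plan is to separate the invertible and non-invertible cases. If one of $T_1$, $T_2$ is not invertible, then neither is $T_2T_1$: injectivity of $T_2T_1$ would force $T_1$ to be injective, hence — being Fredholm of index zero — invertible, and surjectivity of $T_2T_1$ would force $T_2$ to be surjective, hence invertible. So in this case both sides of the asserted identity vanish by Definition \ref{sign}, and there is nothing more to prove. It remains to treat $T_1 \in \Iso(E_1,E_2)$ and $T_2 \in \Iso(E_2,E_3)$, in which case $T_2T_1 \in \Iso(E_1,E_3)$ and, by Definition \ref{oriented composition} together with Lemma \ref{inversion in an oriented composition}, its oriented composition is a well-defined oriented invertible operator.

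Fix positive companions $K_1$ of $T_1$ and $K_2$ of $T_2$. Then $K = (T_2+K_2)(T_1+K_1)-T_2T_1 = T_2K_1+K_2T_1+K_2K_1 \in \F(E_1,E_3)$ is, by construction, a positive companion of $T_2T_1$, and $T_2T_1+K=(T_2+K_2)(T_1+K_1)$. Applying Remark \ref{sign test} to the oriented invertible operator $T_2T_1$ with this positive companion gives
\[
\sign(T_2T_1) = \sign\det\big((T_2T_1+K)^{-1}(T_2T_1)\big)
= \sign\det\big((T_1+K_1)^{-1}(T_2+K_2)^{-1}T_2T_1\big).
\]
The operator inside the last determinant belongs to $\A(E_1)$, since it equals $I_{E_1}-(T_1+K_1)^{-1}(T_2+K_2)^{-1}K$ with $(T_1+K_1)^{-1}(T_2+K_2)^{-1}K\in\F(E_1)$. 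Conjugating by $T_1\in\Iso(E_1,E_2)$ and using the second property of Remark \ref{properties of determinant},
\[
\det\big((T_1+K_1)^{-1}(T_2+K_2)^{-1}T_2T_1\big)
= \det\big(T_1(T_1+K_1)^{-1}(T_2+K_2)^{-1}T_2\big).
\]
Here $T_1(T_1+K_1)^{-1}=I_{E_2}-K_1(T_1+K_1)^{-1}$ and $(T_2+K_2)^{-1}T_2=I_{E_2}-(T_2+K_2)^{-1}K_2$ both lie in $\A(E_2)$, so the third property of Remark \ref{properties of determinant} yields
\[
\det\big(T_1(T_1+K_1)^{-1}(T_2+K_2)^{-1}T_2\big)
= \det\big(T_1(T_1+K_1)^{-1}\big)\,\det\big((T_2+K_2)^{-1}T_2\big).
\]
Taking signs and invoking Remark \ref{sign test} once more — for $T_1$ with positive companion $K_1$ and for $T_2$ with positive companion $K_2$ — the two factors on the right-hand side have signs $\sign T_1$ and $\sign T_2$, respectively, whence $\sign(T_2T_1)=\sign T_2\,\sign T_1$.

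The only point requiring care is purely bookkeeping: one must choose the conjugating isomorphism so that the rightmost factor $T_1$ cancels against part of $\big((T_2+K_2)(T_1+K_1)\big)^{-1}$, bringing the whole expression into $\L(E_2)$, and then check at each stage that the operators involved are admissible before using the multiplicativity and conjugation-invariance of the determinant; nothing here is deep. As an alternative one can avoid computation altogether: by Lemma \ref{inversion in an oriented composition} the integer $\sign(T_2T_1)$ changes sign when the orientation of exactly one of $T_1$, $T_2$ is reversed, so $\sign(T_2T_1)/(\sign T_1\sign T_2)$ does not depend on the two orientations; evaluating it with $T_1$ and $T_2$ naturally oriented, where $K_1=K_2=0$ and hence $K=0$ and $T_2T_1$ is naturally oriented, shows this constant equals $1$.
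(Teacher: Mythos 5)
Your proposal is correct. Your main argument, however, takes a genuinely different route from the paper: the paper disposes of the non-invertible case as obvious, verifies the identity when both $T_1$ and $T_2$ are naturally oriented (where $K_1=K_2=0$ forces $K=0$, so $T_2T_1$ is naturally oriented and both sides equal $1$), and then invokes Lemma \ref{inversion in an oriented composition} to propagate the identity to the remaining orientation choices --- which is exactly the ``alternative'' you sketch in your last sentences. Your primary argument instead computes $\sign(T_2T_1)$ directly from Remark \ref{sign test} with the companion $K=(T_2+K_2)(T_1+K_1)-T_2T_1$, conjugates by $T_1$ to land in $\A(E_2)$, and factors the determinant via the multiplicativity and conjugation-invariance of Remark \ref{properties of determinant}; this is essentially a re-run of the determinant manipulation already carried out in the proof of Lemma \ref{inversion in an oriented composition}, specialised to the invertible case. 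What each approach buys: the paper's proof is shorter because it reuses that lemma as a black box, while yours is self-contained at the corollary level and makes the multiplicative structure of the signs explicit; both are valid, and your handling of the non-invertible case (not injective or not surjective plus index zero forces non-invertibility of the composite) is a clean justification of the step the paper merely calls obvious.
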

\begin{proof}
If one of the two operators is not invertible, then the assertion is obvious.
Assume, therefore, that $T_1$ and $T_2$ are isomorphisms.
If both the operators are naturally oriented, then the assertion follows from the definition of oriented composition.
The other cases are a consequence of Lemma \ref{inversion in an oriented composition}.
\end{proof}

Given $R_1 \in \Iso(E_1,F_1)$ and $R_2 \in \Iso(E_2,F_2)$, observe that the function
\[
\Lambda\colon \L(E_1,E_2) \to \L(F_1,F_2), \quad T \mapsto R_2TR_1^{-1}
\]
is a linear isomorphism (whose inverse is given by $\widetilde T \mapsto R_2^{-1}\widetilde TR_1$).
One can see that $\Lambda$ establishes a correspondence between certain pairs of subsets of $\L(E_1,E_2)$ and $\L(F_1,F_2)$.
For example, $\Iso(E_1,E_2)$ and $\Iso(F_1,F_2)$, $\F(E_1,E_2)$ and $\F(F_1,F_2)$, $\Phi_0(E_1,E_2)$ and $\Phi_0(F_1,F_2)$.
Moreover if, in particular, $T \in \Phi_0(E_1,E_2)$, then $\Lambda$ sends the set $\mathcal C(T)$ onto the set $\mathcal C(\Lambda(T))$, and if $K_1, K_2 \in \mathcal C(T)$ are equivalent (according to Definition \ref{equivalence relation}), so are $\Lambda(K_1), \Lambda(K_2) \in \mathcal C(\Lambda(T))$.

\medskip
Since the oriented composition is associative, this notion can be extended to the composition of three (or more) oriented operators.

\smallskip
\subsection{Topological preliminaries}
\label{Topological preliminaries}
Let, hereafter, $X$ denote a metric space.
We recall that a subset $A$ of $X$ is \emph{locally compact} if any point of $A$ admits a neighborhood, in $A$, which is compact.
Therefore, any compact subset of $X$ is locally compact, as is any relatively open subset of a locally compact set. However, the union of two locally compact subsets of $X$ may not be locally compact.

We recall also that a continuous map between metric spaces is said to be \emph{proper} if the inverse image of any compact set is a compact set, while it is called \emph{locally proper} if its restriction to a convenient closed neighborhood of any point of its domain is proper.
Thus, level sets of locally proper maps are locally compact.

One can check that proper maps are \emph{closed}, in the sense that the image of any closed set is a closed set.

\begin{notation}
\label{slice}
Let $D$ be a subset of the product $X\per Y$ of two metric spaces.
Given $x \in X$, we call \emph{$x$-slice of $D$} the set
$D_x = \{y \in Y: (x,y) \in D\}$.
\end{notation}

Assume, from now on, that the vector spaces $E$ and $F$ are actually Banach.
In this framework, any Fredholm operator is assumed to be bounded.
Therefore, in addition to the algebraic properties (1) and (2) stated in Subsection \ref{Algebraic preliminaries}, one has the following topological ones (see e.g.\ \cite{TaLa}):
\begin{itemize}
\item
[(3)] \emph{if $T \in \Phi(E,F)$, then $\Im T$ is closed in $F$;}
\item
[(4)] \emph{if $T \in \Phi(E,F)$, then $T$ is proper on any bounded closed subsets of $E$;}
\item
[(5)] \emph{for any $n \in \Z$, the set $\Phi_n(E,F)$ is open in $\L(E,F)$;}
\item
[(6)] \emph{if $T \in \Phi_n(E,F)$ and $K \in \L(E,F)$ is compact, then $T+K \in \Phi_n(E,F)$.}
\end{itemize}

\medskip
Let us now sketch the construction and summarize the main properties of the degree introduced in \cite{BeFu1}.

The basic fact is that, in the context of Banach spaces, the orientation of an operator $T_* \in \Phi_0(E,F)$ induces an orientation to the operators in a neighborhood of $T_*$.
Indeed, due to the fact that $\Iso(E,F)$ and $\Phi_0(E,F)$ are open in $\L(E,F)$, any companion of $T_*$ remains a companion of all $T$ sufficiently close to $T_*$.

\medskip
Therefore, the following definition makes sense.

\begin{definition}
\label{orientation of Gamma}
Let $\G\colon X \to \Phi_0(E,F)$ be a continuous map defined on a metric space $X$.
A \emph{pre-orientation of $\G$} is a function that to any $x \in X$ assigns an orientation $\o(x)$ of $\G(x)$.
A pre-orientation (of $\G$) is an \emph{orientation} if it is \emph{continuous}, in the sense that, given any $x_* \in X$, there exist $K \in \o(x_*)$ and a neighborhood $W$ of $x_*$ such that $K \in \o(x)$ for all $x \in W$.
The map $\G$ is said to be \emph{orientable} if it admits an orientation, and \emph{oriented} if an orientation has been chosen.
In particular, a subset $Y$ of $\Phi_0(E,F)$ is orientable or oriented if so is the inclusion map $Y \hookrightarrow \Phi_0(E,F)$.
\end{definition}

Observe that the set $\hat\Phi_0(E,F)$ of the oriented operators of $\Phi_0(E,F)$ has a natural topology, and the natural projection $\pi\colon\hat\Phi_0(E,F) \to \Phi_0(E,F)$ is a $2$-fold covering space (see \cite{BeFu2} for details).
Therefore, an orientation of a map $\G$ as in Definition \ref{orientation of Gamma} could be regarded as a lifting $\hat \G$ of $\G$.
This implies that, if the domain $X$ of $\G$ is simply connected and locally path connected, then $\G$ is orientable.

\medskip
Let $f\colon U \to F$ be a $C^1$-map defined on an open subset of $E$, and
denote by $df_x \in \L(E,F)$ the Fr\'echet differential of $f$ at a point $x \in U$.

We recall that $f$ is said to be \emph{Fredholm of index $n$}, from now on written $f \in \Phi_n$, if $df_x \in \Phi_n(E,F)$ for all $x \in U$.
Therefore, if $f \in \Phi_0$, Definition \ref{orientation of Gamma} and the continuity of the differential map $df\colon U \to \Phi_0(E,F)$ suggest the following

\begin{definition}[Orientation of a $\Phi_0$-map in Banach spaces]
\label{Orientation of a map in the flat case}
Let $U$ be an open subset of $E$ and $f\colon U \to F$ a Fredholm map of index zero.
A \emph{pre-orientation} or an \emph{orientation} of $f$ are, respectively, a pre-orientation or an orientation of $df$, according to Definition \ref{orientation of Gamma}.
The map $f$ is said to be \emph{orientable} if it admits an orientation, and \emph{oriented} if an orientation has been chosen.
\end{definition}

\begin{remark}
\label{double nature of a Phi-zero operator}
A very special $\Phi_0$-map is given by an operator $T \in \Phi_0(E,F)$.
Thus, for $T$ there are two different notions of orientations: the algebraic one, according to Definition \ref{orientation of T}; and the one regarding $T$ as a $C^1$-map (according to Definition \ref{Orientation of a map in the flat case}).
In each case $T$ admits exactly two orientations (in the second one this is due to the connectedness of the domain $E$).
Hereafter, we shall tacitly assume that the two notions agree. Namely, $T$ has an algebraic orientation $\o$ if and only if its differential $dT_x\colon \dot x \mapsto T\dot x$ has the $\o$ orientation for all $x \in E$.
\end{remark}

We will show how the notion of orientation in Definition \ref{Orientation of a map in the flat case} can be extended to the case of maps acting between real Banach manifolds.
To this purpose, we need some further notation.

For short, by a \emph{manifold} we shall mean a smooth Banach manifold embedded in a real Banach space.

Given a manifold $\M$ and a point $x \in \M$, the tangent space of $\M$ at $x$ will be denoted by $T_x\M$.
If $\M$ is embedded in a Banach space $\widetilde E$, $T_x\M$ will be identified with a closed subspace of $\widetilde E$, for example by regarding any tangent vector of $T_x\M$ as the derivative $\g'(0)$ of a smooth curve $\g\colon (-1,1) \to \M$ such that $\g(0)=x$.

\medskip
Assume that $f \colon \M \to \N$ is a $C^1$-map between two manifolds, respectively embedded in $\widetilde E$ and $\widetilde F$ and modelled on $E$ and $F$.
As in the flat case, $f$ is said to be \emph{Fredholm of index $n$} (written $f \in \Phi_n$) if so is the differential $df_x \colon T_x\M \to T_{f(x)}\N$, for any $x \in \M$.

Given $f \in \Phi_0$, suppose that to any $x \in \M$ it is assigned an orientation $\o(x)$ of $df_x$ (also called \emph{orientation of $f$ at $x$}).
As above, the function $\o$ is called a \emph{pre-orientation} of $f$, and an \emph{orientation} if it is continuous, in a sense to be specified (see Definition \ref{Orientation of a map in the non-flat case}).

\begin{definition}
\label{pre-oriented composition}
The pre-oriented composition of two (or more) pre-oriented maps between manifolds is given by assigning, at any point $x$ of the domain of the composite map, the composition of the orientations (according to Definition \ref{oriented composition}) of the differentials in the chain representing the differential at $x$ of the composite map.
\end{definition}

Assume that $f \colon \M \to \N$ is a $C^1$-diffeomorphism.
Then, in particular, given any $x \in \M$, the differential $df_x$ is an isomorphism.
Thus, for any $x \in \M$, we may take as $\o(x)$ the natural orientation of $df_x$ (recall Definition \ref{natural orientation}).
This pre-orientation of $f$ turns out to be continuous according to Definition \ref{Orientation of a map in the non-flat case} below (it is, in some sense, constant).
From now on, unless otherwise stated, \textbf{any diffeomorphism will be considered oriented with the natural orientation}.
In particular, in a composition of pre-oriented maps, all charts and parametrizations of a manifold will be tacitly assumed to be naturally oriented.

\begin{definition}[Orientation of a $\Phi_0$-map between manifolds]
\label{Orientation of a map in the non-flat case}
Let $f \colon \M \to \N$ be a $\Phi_0$-map between two manifolds modelled on $E$ and $F$, respectively.
A pre-orientation of $f$ is an \emph{orientation} if it is \emph{continuous} in the sense that, given any two charts, $\varphi\colon U \to E$ of $\M$ and $\psi\colon V \to F$ of $\N$, such that $f(U) \subseteq V$, the pre-oriented composition
\[
\psi \circ f \circ \varphi^{-1} \colon U \to V
\]
is an oriented map according to Definition \ref{Orientation of a map in the flat case}.

The map $f$ is said to be \emph{orientable} if it admits an orientation, and \emph{oriented} if an orientation has been chosen.
\end{definition}

\medskip
Perhaps, the simplest example of non-orientable $\Phi_0$-map is given by a constant map from the $2$-dimensional projective space into $\R^2$ (see \cite{BeFu2}).

\begin{remark}
\label{composition of orientations}
One can check that the pre-oriented composition of orientations is an orientation.
\end{remark}

\begin{remark}
\label{adapted attributes}
Regarding the attribution that we will assign to some particular orientations of $\Phi_0$-maps between manifolds, whenever it makes sense, we will adapt the terminology for $\Phi_0$-operators, such as ``natural orientation'', ``associated orientation'', ``canonical orientation''.
\end{remark}
For example any local diffeomorphism $f\colon \M \to \N$ admits the \emph{natural orientation}, given by assigning the natural orientation to the operator $df_x$, for any $x \in \M$ (see Definition \ref{natural orientation}).
As another example, assume the manifolds $\M$ and $\N$ have the same finite dimension and are oriented, then any $C^1$-map between them admits the \emph{associated orientation} (see Definition~\ref{associated orientation}).
A third example is given by a $C^1$-map $f\colon \R^k \to \R^k$: it can be given the \emph{canonical orientation} (see Definition \ref{canonical orientation}).

\medskip
The concept of canonical orientation of a $C^1$-map $f\colon \R^k \to \R^k$ can be extended to a more general situation that we shall need in the next section.
In fact, if $E$ is a real Banach space, in spite of the fact that the function $\det\colon \A(E) \to \R$ can be discontinuous (see e.g.~\cite{BeFuPeSp07}), one has the following

\begin{remark}
\label{canonical orientation of a map}
Let $X$ be a metric space and $E = E_1 \per E_2$ a real Banach space,
with $\dim E_2 < +\infty$.
Assume that $\G\colon X \to \A(E)$ is a continuous map that can be represented in a block matrix form as follows:
\[
\G =
\left(
\begin{array}{cc}
I_{11} & \G_{12}\\[2ex]
0 & \G_{22}
\end{array}
\right),
\]
where $I_{11}$ is the identity of $E_1$, $\G_{12}\colon X \to \L(E_2,E_1)$, and $\G_{22}\colon X \to \L(E_2)$.
Then, according to Remark \ref{determinant}, one has $\det \G(x) = \det \G_{22}(x)$, for all $x \in X$.
Moreover, the pre-orientation of $\G$ given by assigning, to any $x \in X$, the canonical orientation of the operator $\G(x)$ is actually an orientation, and has the property that $\sign \G(x) = \sign \det \G(x)$ for all $x \in X$.
\end{remark}

\medskip
Similarly to the case of a single map, one can define a notion of orientation of a continuous family of $\Phi_0$-maps depending on a parameter $s \in [0,1]$.
To be precise, one has the following

\begin{definition}[Oriented $\Phi_0$-homotopy]
\label{Phi-zero-homotopy}
A \emph{$\Phi_0$-homotopy} between two Banach manifolds $\M$ and $\N$ is a $C^1$-map $h \colon [0,1] \per\M \to \N$ such that, for any $s \in [0,1]$, the partial map $h_s= h(s,\cdot)$ is Fredholm of index zero.
An orientation of $h$ is a \emph{continuous function} $\o$ that to any $(s,x) \in [0,1]\per\M$ assigns an orientation $\o(s,x)$ to the differential
$d(h_s)_x \in \Phi_0(T_x\M, T_{h(s,x)}\N)$.
Where ``continuous'' means that, given any chart $\varphi\colon U \to E$ of $\M$, a subinterval $J$ of $[0,1]$, and a chart $\psi\colon V \to F$ of $\N$ such that $h(J\per U) \subseteq V$, the pre-orientation of the map $\G\colon J\per U \to \Phi_0(E,F)$ that to any $(s,x) \in J\per U$ assigns the pre-oriented composition
\[
d(\psi \circ h_s \circ \varphi^{-1})_x
= d\psi_{h(s,x)}d(h_s)_x (d\varphi_x)^{-1}
\]
is an orientation, according to Definition \ref{orientation of Gamma}.

The homotopy $h$ is said to be \emph{orientable} if it admits an orientation, and \emph{oriented} if an orientation has been chosen.
\end{definition}

If a $\Phi_0$-homotopy $h$ has an orientation $\o$, then any partial map $h_s = h(s,\cdot)$ has a \emph{compatible} orientation $\o(s,\cdot)$.
Conversely, on has the following

\begin{proposition}[\!\cite{BeFu1,BeFu2}]
\label{orientation transport}
Let $h\colon [0,1]\per\M \to \N$ be a $\Phi_0$-homotopy, and assume that one of its partial maps, say $h_s$, has an orientation.
Then, there exists and is unique an orientation of $h$ which is compatible with that of $h_s$.
In particular, if two maps from $\M$ to $\N$ are $\Phi_0$-homotopic, then they are both orientable or both non-orientable.
\end{proposition}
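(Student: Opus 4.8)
The plan is to recast an orientation of $h$ as a continuous section of a suitable $2$-fold covering over $[0,1]\per\M$ and then to transport a given section using the fact that each slice $\{\bar s\}\per\M$ is a deformation retract of $[0,1]\per\M$ (here $\bar s$ denotes the parameter for which an orientation of $h_{\bar s}$ is prescribed).

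First I would build, over $[0,1]\per\M$, a $2$-fold covering $q\colon\mathcal D h\to[0,1]\per\M$ whose fibre at $(s,x)$ consists of the two orientations of the operator $d(h_s)_x\in\Phi_0(T_x\M,T_{h(s,x)}\N)$. Its topology is read off in charts: if $\varphi\colon U\to E$ is a chart of $\M$, $\psi\colon V\to F$ a chart of $\N$, and $J$ a subinterval of $[0,1]$ with $h(J\per U)\subseteq V$, then over $J\per U$ one identifies $\mathcal D h$ with the pullback $\G^{*}\hat\Phi_0(E,F)$ along the continuous map $\G\colon J\per U\to\Phi_0(E,F)$ given by $\G(s,x)=d\psi_{h(s,x)}\,d(h_s)_x\,(d\varphi_x)^{-1}$. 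Since $\pi\colon\hat\Phi_0(E,F)\to\Phi_0(E,F)$ is a $2$-fold covering, each of these restrictions is a $2$-fold covering, and on overlapping chart domains the identifications agree because the oriented composition is associative and all charts are taken naturally oriented, so that inserting transition diffeomorphisms leaves orientations unchanged (Remark \ref{composition of orientations}). Thus $q$ is a well-defined $2$-fold covering, and, by Definitions \ref{Phi-zero-homotopy} and \ref{Orientation of a map in the non-flat case}, an orientation of $h$ is exactly a continuous section of $q$ over $[0,1]\per\M$, whereas an orientation of a partial map $h_s$ is a continuous section of $q$ over $\{s\}\per\M$; ``compatible'' then means nothing more than ``obtained by restriction''.

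Now let $\sigma_{\bar s}\colon\{\bar s\}\per\M\to\mathcal D h$ be the continuous section corresponding to the prescribed orientation of $h_{\bar s}$, let $\iota\colon\{\bar s\}\per\M\hookrightarrow[0,1]\per\M$ be the inclusion and $r\colon[0,1]\per\M\to\{\bar s\}\per\M$, $r(s,x)=(\bar s,x)$, the obvious retraction. The map $H\colon([0,1]\per\M)\per[0,1]\to[0,1]\per\M$, $H((s,x),u)=((1-u)\bar s+us,x)$, satisfies $H(\cdot,0)=\iota\circ r$, $H(\cdot,1)=\mathrm{id}$, and is stationary on $\{\bar s\}\per\M$. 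Since $\sigma_{\bar s}\circ r$ lifts $H(\cdot,0)$, the homotopy lifting property of the covering $q$ yields $\widetilde H$ with $q\circ\widetilde H=H$ and $\widetilde H(\cdot,0)=\sigma_{\bar s}\circ r$; then $\sigma:=\widetilde H(\cdot,1)$ is a continuous section of $q$, that is, an orientation of $h$, and over $\{\bar s\}\per\M$ the homotopy $\widetilde H$ lifts constant paths, hence is stationary there by uniqueness of path lifting, so $\sigma$ restricts to $\sigma_{\bar s}$. This proves existence. For uniqueness, if $\sigma'$ is another orientation of $h$ restricting to $\sigma_{\bar s}$, then on each segment $[0,1]\per\{x\}$ both $\sigma$ and $\sigma'$ lift the path $s\mapsto(s,x)$ and agree at $s=\bar s$, hence coincide; thus $\sigma=\sigma'$. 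Finally, if $f$ and $g$ are $\Phi_0$-homotopic via a $\Phi_0$-homotopy $h$ with $h_0=f$ and $h_1=g$, then any orientation of $f$ extends, by what has just been proved with $\bar s=0$, to an orientation of $h$, whose restriction to the slice $\{1\}\per\M$ is an orientation of $g$; the converse is symmetric.

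The step I expect to be genuinely delicate is the construction of $q$: one must verify that the chart-wise local trivializations of $\mathcal D h$ are mutually compatible, i.e.\ that ``an orientation of $d(h_s)_x$'' is intrinsic and not an artefact of the charts used to display it. This is exactly what the earlier material supplies — associativity of the oriented composition, Remark \ref{composition of orientations}, and the standing convention that charts and parametrizations are naturally oriented. Once $q$ is available, the remainder is the standard transport of a section along a deformation retraction in covering space theory; full details are in \cite{BeFu1,BeFu2}.
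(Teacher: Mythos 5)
The paper does not prove this proposition itself — it is quoted from \cite{BeFu1,BeFu2} — but your argument is correct and follows exactly the viewpoint the paper sets up after Definition \ref{orientation of Gamma}, where an orientation is regarded as a lifting to the $2$-fold covering $\hat\Phi_0(E,F)\to\Phi_0(E,F)$. Once orientations of $h$ are identified with continuous sections of the induced double cover of $[0,1]\per\M$ (the chart-compatibility point you flag being the only delicate step, and handled by the associativity of the oriented composition together with the natural orientation of charts), existence and uniqueness are the standard transport of a section along the deformation retraction onto the slice $\{\bar s\}\per\M$, as you say.
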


As a consequence of Proposition \ref{orientation transport}, one gets that any $C^1$-map $f\colon \M \to \M$ which is $\Phi_0$-homotopic to the identity is orientable, since so is the identity (even when $\M$ is finite dimensional and not orientable).

\medskip
The degree for oriented $\Phi_0$-maps defined in \cite{BeFu1} satisfies the three fundamental properties stated below and called \emph{Normalization, Additivity and Homotopy Invariance}.
By an axiomatic approach similar to the one due to Amann-Weiss in \cite{Amann-Weiss} for the Leray--Schauder degree, in \cite{BeFu5} it is shown that the degree constructed in \cite{BeFu1} is the only possible integer valued function that satisfies these three properties.

To be more explicit, let us define, first, the domain of this degree function.
Given an oriented $\Phi_0$-map $f\colon \M \to \N$, an open (possibly empty) subset $U$ of $\M$, and a \emph{target value} $y \in \N$, the triple $(f,U,y)$ is said to be \emph{admissible} for the degree provided that $U \cap f^{-1}(y)$ is compact.
From the axiomatic point of view, the degree is an integer valued function, $\deg$, defined on the class of all the admissible triples, that satisfies the following three \emph{fundamental properties}:

\medskip
\begin{itemize}
\item
(Normalization) \emph{If $f\colon \M \to \N$ is a naturally oriented diffeomorphism onto an open subset of $\N$, then
\[
\deg(f,\M,y) = 1,\quad \forall y \in f(\M).
\]}
\item
(Additivity) \emph{Let $(f,U,y)$ be an admissible triple.
If $U_1$ and $U_2$ are two disjoint open subsets of $U$ such that $U \cap f^{-1}(y) \subseteq U_1 \cup U_2$, then}
\[
\deg(f,U,y) = \deg(f|_{U_1},U_1,y) + \deg(f|_{U_2},U_2,y).
\]
\item
(Homotopy Invariance) \emph{Let $h\colon [0,1]\per\M \to \N$ be an oriented $\Phi_0$-homotopy, and $\g\colon [0,1] \to \N$ a continuous path.
If the set
\[
\big\{(s,x) \in [0,1]\per\M: h(s,x) = \g(s)\big\}
\]
is compact, then
$
\deg(h(s,\cdot),\M,\g(s))
$
does not depend on $s \in [0,1]$.}
\end{itemize}

\medskip
Other properties can be deduced from the fundamental ones (see \cite{BeFu5} for details). We mention only some of them. One of these is the
\medskip
\begin{itemize}
\item
(Localization) \emph{If $(f,U,y)$ is an admissible triple, then
\[
\deg(f,U,y) = \deg(f|_U,U,y).
\]}
\end{itemize}

Another one is the
\medskip
\begin{itemize}
\item
(Excision) \emph{If $(f,U,y)$ is admissible and $V$ is an open subset of $U$ such that $f^{-1}(y)\cap U \subseteq V$, then
\[
\deg(f,U,y) = \deg(f,V,y).
\]}
\end{itemize}

A significant one is the
\medskip
\begin{itemize}
\item
(Existence) \emph{If $(f,U,y)$ is admissible and $\deg(f,U,y) \not= 0$, then the equation $f(x) = y$ admits at least one solution in $U$.}
\end{itemize}

\medskip
Roughly speaking, given an admissible triple $(f,U,y)$, the integer $\deg(f,U,y)$ is an algebraic count of the solutions in $U$ of the equation $f(x) = y$.
More precisely, as a consequence of the fundamental properties, one gets the following

\medskip
\begin{itemize}
\item
(Computation Formula)
\emph{If $(f,U,y)$ is admissible and $y$ is a regular value for $f$ in $U$, then the set $U \cap f^{-1}(y)$ is finite and
\[
\deg(f,U,y) = \sum_{x \in U \cap f^{-1}(y)} \sign(df_x).
\]}
\end{itemize}

Another property that can be deduced from the fundamental ones is a generalization of the Homotopy Invariance Property, that we will need in Section \ref{Results 2}.
This requires the following extension of the concept of $\Phi_0$-homotopy:

\begin{definition}[Extended $\Phi_0$-homotopy]
\label{extended Phi-zero-homotopy}
An \emph{extended $\Phi_0$-homotopy} from $\M$ to $\N$ is a \hbox{$C^1$-}map $h\colon \I\per\M \to \N$, where $\I$ is an arbitrary (nontrivial) real interval, such that any partial map $h_s = h(s,\cdot)$ of $h$ is a $\Phi_0$-map.
\end{definition}

The notion of orientation for an extended $\Phi_0$-homotopy is practically identical to the one in Definition \ref{Phi-zero-homotopy} and its formulation is left to the reader.

\medskip
As a consequence of the Excision and the Homotopy Invariance properties of the degree we get the following

\medskip
\begin{itemize}
\item
(Generalized Homotopy Invariance)
\emph{Let $h\colon \I\per\M \to \N$ be an oriented extended $\Phi_0$-homotopy, $\g\colon \I \to N$ a continuous path, and $W$ an open subset of $\I\per\M$.
Given any $s \in \I$, denote by $W_s = \{x \in \M: (s,x) \in W\}$ the $s$-slice of $W$.
If the set
$
\big\{(s,x) \in W\colon h(s,x) = \g(s)\big\}
$
is compact, then
$
\deg(h_s,W_s,\g(s))
$
does not depend on $s \in \I$.}
\end{itemize}

\medskip
The easy proof of this property can be performed by showing that the integer valued function $s \in \mathcal I \mapsto \nu(s) := \deg(h_s,W_s,\g(s))$ is locally constant.
In fact, given any $s_* \in \I$, because of the compactness of the set
\[
\big\{(s,x) \in W\colon h(s,x) = \g(s)\big\},
\]
one can find a box $J \per V \subseteq W$, with V open in $\M$ and $J$ an open interval containing $s_*$, such that $W_s \cap h_s^{-1}(\g(s)) \subseteq V$ for all $s \in J\cap\I$.
Thus, from the Excision Property, one gets $\nu(s) = \deg(h_s,V,\g(s))$ for all $s \in J\cap\I$. Moreover, because of the Homotopy Invariance Property, $\nu(s)$ does not depend on $s \in J\cap\I$.
Hence, since $\I$ is connected and $s_* \in \I$ is arbitrary, one gets the assertion.

\section{The eigenvalue problem and the associated topological degree}
\label{Results 1}

Hereafter, $G$ and $H$ will be two real Hilbert spaces, with inner product and norm denoted by $\langle \cdot,\cdot \rangle$ and $\|\cdot\|$, respectively.

Consider the eigenvalue problem
\begin{equation}
\label{eigenvalue problem}
\left\{
\begin{aligned}
&Lx = \l Cx\\
&x \in \S,
\end{aligned}\right.
\end{equation}
where $\l \in \R$, $L, C \in \L(G,H)$, $C$ is compact, and $\S$ is the unit sphere of $G$.
We assume that the operator $L - \l C \in \L(G,H)$ is invertible for some $\l \in \R$.
Therefore, because of the compactness of $C$, $L - \l C$ is Fredholm of index zero for any $\l \in \R$ (recall property (6) of Fredholm operators in Subsection \ref{Topological preliminaries}).
In particular, $\Ker(L - \l C)$ is always finite dimensional, and nontrivial if and only if $\l \in \R$ is an eigenvalue of \eqref{eigenvalue problem}.
Moreover, the set of all the real eigenvalues of \eqref{eigenvalue problem} is discrete.

\medskip
A pair $(\l,x)$ belonging to the cylinder $\R\per\S$ will be called an \emph{eigenpoint of \eqref{eigenvalue problem}} if it satisfies the equation $Lx = \l Cx$.
In this case, $x$ is a \emph{unit eigenvector} of \eqref{eigenvalue problem} corresponding to the eigenvalue $\l$.

The set of the eigenpoints of \eqref{eigenvalue problem} will be denoted by $\mathcal S$.
Hence, given any $\l \in \R$, the \emph{$\l$-slice} $\mathcal S_\l = \{x \in \S: (\l,x) \in \mathcal S\}$ of $\mathcal S$ coincides with $\S \cap \Ker(L-\l C)$.

 Observe that $\mathcal S_\l$ is nonempty if and only if $\l$ is an eigenvalue of \eqref{eigenvalue problem}.
 In this case $\mathcal S_\l$ will be called the \emph{eigensphere of \eqref{eigenvalue problem} corresponding to $\l$}.
 In fact, it is a sphere whose (finite) dimension equals that of $\Ker(L-\l C)$ minus one.

If $\l \in \R$ is an eigenvalue of \eqref{eigenvalue problem}, the nonempty subset $\{\l\} \per \mathcal S_\l$ of $\mathcal S$ will be called the \emph{$\l$-eigenset of \eqref{eigenvalue problem}}.

\medskip
It is convenient to regard $\R\per\S$ as the subset of the Hilbert space $\R\per G$ satisfying the equation $g(\l,x) = 1$, where $g \colon \R\per G \to \R$ is defined by $g(\l, x) = \langle x, x \rangle$.
The differential $dg_p \in \L(\R\per G,\R)$ of $g$ at a point $p = (\l, x)$ is given by $(\dot\l,\dot x) \mapsto 2\langle x,\dot x \rangle$. Therefore, the set of the critical points of $g$ is the $\l$-axis $ x=0$ and, consequently, the number $1$ is a regular value for $g$. This shows that $\R\per\S$ is a smooth manifold of codimension one in $\R\per G$ and, given any $p = (\l, x) \in g^{-1}(1)$, the tangent space of $\R\per\S$ at $p$ is the kernel of $dg_p$, namely
\[
T_{(\l, x)}(\R\per\S) = \big\{(\dot\l,\dot x) \in \R\per G: \langle x,\dot x\rangle = 0 \big\} = \R\per x^\perp.
\]

Observe that, if $\dim G = 1$ and $(\l, x) \in \R\per\S$, then $x^\perp = \{0\}$ and the tangent space $T_{(\l, x)}(\R\per\S)$ is the subspace $\R\per\{0\}$ of $\R\per\R$.
Moreover, the cylinder $\R\per\S$ is disconnected: it is the union of two horizontal lines, $\R\per\{-1\}$ and $\R\per\{1\}$.
Due to this fact, in order to write some statements in a simpler form, \textbf{hereafter, unless otherwise stated, we will assume that the dimensions of the Hilbert spaces $G$ and $H$ are bigger than $1$}, so that the cylinder $\R\per\S$ is connected.

\medskip
Define the smooth map
\[
\overline \Psi \colon \R\per G \to H
\quad
\text{by}
\quad
(\l, x) \mapsto L x-\l Cx
\]
and observe that it is Fredholm of index one.
Therefore, its restriction
\[
\Psi \colon \R \per \S \to H
\]
to the $1$-codimensional submanifold $\R \per \S$ of $\R \per G$ is Fredholm of index zero.
To see this, recall that the differential of $\Psi$ at $p \in \R\per\S$ is the restriction of $d \overline \Psi_p$ to the $1$-codimensional subspace $T_p(\R\per\S)$ of $\R\per G$.

The map $\Psi$ will play a fundamental role in this paper.
Notice that its zeros are the eigenpoints of \eqref{eigenvalue problem}.
That is, $\mathcal S = \Psi^{-1}(0)$.

We point out that $\Psi$ is orientable and, because of the connectedness of the manifold $\R\per\S$, admits exactly two orientations.
In fact, in the finite dimensional case, an orientation of $\Psi$ is equivalent to a pair of orientations, one of the domain and one of the codomain, up to an inversion of both of them (see Definition \ref{associated orientation}); while, if $\dim G= +\infty$, the orientability of $\Psi$ is a consequence of the fact that the cylinder $\R \per \S$ is simply connected (it is actually contractible).
Therefore, \textbf{from now on, we shall assume that $\Psi$ is oriented}.
No matter which one of the two orientations one selects, all the statements in this paper hold true.

\begin{definition}
\label{isolated set}
Let $X$ be a metric space and $\K \subseteq \A \subseteq X$. We shall say that \emph{$\K$ is an isolated subset of $\A$} if it is compact and relatively open in $\A$.
Thus, there exists an open subset $U$ of $X$ such that $U \cap \A = \K$, which will be called an \emph{isolating neighborhood of $\K$ among (the elements of) $\A$}.
\end{definition}

\begin{definition}
\label{definition of Psi-degree}
Let $\K \subset \R\per\S$ be an isolated subset of $\Psi^{-1}(0)$.
By the \emph{\hbox{$\Psi$\!-degree} of $\K$} we mean the integer $\psideg(\K) := \deg(\Psi,U,0)$, where $U \subseteq \R\per\S$ is any isolating neighborhood of $\K$ among $\Psi^{-1}(0)$.
If $p$ is an isolated zero of $\Psi$, we shall simply write $\psideg(p)$ instead of $\psideg(\{p\})$.
\end{definition}

Notice that this definition is well-posed, thanks to the Excision Property of the degree.

\begin{remark}
\label{regular zero of Psi}
If $p \in \Psi^{-1}(0)$ is such that the differential $d\Psi_p$ is an isomorphism, then, as a consequence of the Local Inverse Function Theorem, $\Psi$ maps diffeomorphically a neighborhood of $p$ in $\R\per\S$ onto a neighborhood of $0$ in $H$. Thus, $\{p\}$ is isolated among $\Psi^{-1}(0)$ and, because of the Computation Formula of the degree (see Section \ref{Preliminaries}), $\psideg(p) = \sign(d\Psi_p)$, which is either $1$ or $-1$, depending on whether or not the orientation of $d\Psi_p$ is the natural one.
\end{remark}

\begin{definition}
\label{simple}
An eigenpoint $(\l_*,x_*)$ of \eqref{eigenvalue problem} will be called \emph{simple} if the associated $\Phi_0$-operator $T = L-\l_* C$ satisfies the following conditions:
\begin{itemize}
\item[(1)]
$\Ker T = \R x_*$,
\item[(2)]
$Cx_*\notin \Im T$.
\end{itemize}
\end{definition}

Notice that, if $p_*=(\l_*,x_*)$ is a simple eigenpoint, then the eigenset $\{\l_*\}\per\mathcal S_{\l_*}$ has only two elements: $p_*$ and its \emph{twin eigenpoint} $\bar p_*=(\l_*,-x_*)$, which is as well simple.
Moreover, since $T = L-\l_* C$ is Fredholm of index zero, its image has codimension one in $H$. Therefore one has the following

\begin{remark}
\label{splitting of H}
If $(\l_*,x_*)$ is a simple eigenpoint of \eqref{eigenvalue problem}, then $H = \Im T \oplus \R Cx_*$.
Thus, $\l_*$ is a simple eigenvalue of the equation $Lx = \l Cx$.
\end{remark}

\medskip
The following result is the key to the proof of Theorem \ref{ex conjecture}.
Despite the fact that we have assumed $\dim G > 1$, Theorem \ref{ex conjecture} holds true in any dimension: its assertion in the $1$-dimensional case will be verified in Example \ref{example1}.

\begin{theorem}
\label{psi-degree simple eigenpoint}
Let $p_* = (\l_*,x_*)$ and $\bar p_*=(\l_*,-x_*)$ be two simple twin eigenpoints of \eqref{eigenvalue problem}.
Then, $\psideg(p) = \psideg(\bar p) = \pm 1$.
Consequently, the $\psidegree$ of the $\l_*$-eigenset $\{\l_*\} \per \mathcal S_{\l_*}$ is non-zero.
\end{theorem}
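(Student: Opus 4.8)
The plan is to compute $\psideg(p_*)$ by showing that $d\Psi_{p_*}$ is an isomorphism, so that (by Remark~\ref{regular zero of Psi}) $\psideg(p_*)=\sign(d\Psi_{p_*})=\pm1$, and then to produce a diffeomorphism of $\R\per\S$ carrying $p_*$ to $\bar p_*$ and intertwining $\Psi$ with itself up to sign, which forces $\psideg(\bar p_*)=\psideg(p_*)$. First I would verify that $d\Psi_{p_*}$ is injective: recall $d\Psi_{p_*}$ is the restriction of $d\overline\Psi_{p_*}\colon(\dot\l,\dot x)\mapsto L\dot x-\l_*C\dot x-\dot\l\,Cx_*=T\dot x-\dot\l\,Cx_*$ to the tangent space $T_{p_*}(\R\per\S)=\R\per x_*^\perp$. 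If $T\dot x-\dot\l\,Cx_*=0$ with $\langle x_*,\dot x\rangle=0$, then since $Cx_*\notin\Im T$ (condition (2) of Definition~\ref{simple}) we must have $\dot\l=0$ and hence $\dot x\in\Ker T=\R x_*$; combined with $\dot x\perp x_*$ this gives $\dot x=0$. So $d\Psi_{p_*}$ is injective. For surjectivity, note $d\Psi_{p_*}$ maps the hyperplane $\R\per x_*^\perp$ into $H$, and its image contains $T(x_*^\perp)$ together with $Cx_*$ (take $\dot x=0$, $\dot\l=-1$). Since $\Ker T=\R x_*$, the restriction of $T$ to $x_*^\perp$ is injective with image $\Im T$ (every $Tx$ equals $T$ of the $x_*^\perp$-component of $x$), and by Remark~\ref{splitting of H} $H=\Im T\oplus\R Cx_*$, so $d\Psi_{p_*}$ is onto. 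Thus $d\Psi_{p_*}\in\Iso(\R\per x_*^\perp,H)$, and by Remark~\ref{regular zero of Psi} the point $p_*$ is an isolated zero of $\Psi$ with $\psideg(p_*)=\sign(d\Psi_{p_*})=\pm1$; the same argument applies verbatim to $\bar p_*=(\l_*,-x_*)$, whose associated operator is the same $T$ and which satisfies the same two conditions.

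It remains to show the two twin contributions coincide. The natural candidate is the antipodal-in-the-fibre diffeomorphism
\[
A\colon \R\per\S \to \R\per\S,\qquad A(\l,x)=(\l,-x),
\]
which is a $C^1$-diffeomorphism (it is the restriction of the linear isomorphism $(\l,x)\mapsto(\l,-x)$ of $\R\per G$) with $A(p_*)=\bar p_*$, and which satisfies $\Psi\circ A=-\Psi$ because $\overline\Psi$ is linear in $x$ and hence odd in $x$: $\overline\Psi(\l,-x)=L(-x)-\l C(-x)=-\overline\Psi(\l,x)$. Now I would invoke the multiplicativity of the $\Psi$-degree under composition with an oriented diffeomorphism together with invariance under post-composition with the linear isomorphism $-I$ of $H$ (both of which follow from the fundamental properties of the degree recalled in Section~\ref{Preliminaries}, in particular the Computation Formula and Corollary~\ref{sign of composition}). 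Concretely, using the Computation Formula at the regular zero $\bar p_*$: $d\Psi_{\bar p_*}=d(-\Psi\circ A)_{p_*}=(-I_H)\circ d\Psi_{p_*}\circ dA_{p_*}$, where $dA_{p_*}$ is the restriction of $(\l,x)\mapsto(\l,-x)$ to $T_{p_*}(\R\per\S)\to T_{\bar p_*}(\R\per\S)$, a natural isomorphism. Since $\sign$ is multiplicative along oriented compositions (Corollary~\ref{sign of composition}), $\sign(d\Psi_{\bar p_*})=\sign(-I_H)\,\sign(d\Psi_{p_*})\,\sign(dA_{p_*})$; the orientations on $\R\per\S$ and $H$ induced by the chosen orientation of $\Psi$ are transported consistently so that the two ``extra'' sign factors cancel — equivalently, $A$ is $\Phi_0$-homotopic to the identity through $(\l,x)\mapsto(\l,(\cos\pi t)x+\ldots)$ is not available on the sphere, so instead one argues that $-I_H$ composed with $dA_{p_*}$ is orientation preserving because $\Psi\circ A=-\Psi$ forces the pulled-back orientation of $\Psi$ along $A$ to agree with the orientation of $\Psi$ composed with $-I_H$. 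This yields $\psideg(\bar p_*)=\psideg(p_*)$.

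Finally, the $\l_*$-eigenset is $\{\l_*\}\per\mathcal S_{\l_*}=\{p_*,\bar p_*\}$, which is a compact subset of $\Psi^{-1}(0)$, open in $\Psi^{-1}(0)$ because each of $p_*,\bar p_*$ is isolated there; choosing an isolating neighborhood $U=U_1\sqcup U_2$ with $p_*\in U_1$, $\bar p_*\in U_2$ and applying the Additivity Property gives $\psideg(\{\l_*\}\per\mathcal S_{\l_*})=\psideg(p_*)+\psideg(\bar p_*)=2(\pm1)\neq0$, proving the last assertion.

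The main obstacle I anticipate is the bookkeeping of orientations in the second paragraph: one must check carefully that the orientation data chosen on $\Psi$ is transported along the diffeomorphism $A$ in such a way that $\sign(d\Psi_{\bar p_*})=\sign(d\Psi_{p_*})$ rather than the two differing by a sign. This is where the identity $\Psi\circ A=-\Psi$ together with the precise definition of the oriented composition (Definition~\ref{oriented composition}) and Lemma~\ref{inversion in an oriented composition} must be used, rather than a naive homotopy argument, since $A$ need not be homotopic to the identity on $\R\per\S$ (for instance when $\S$ is an even-dimensional sphere).
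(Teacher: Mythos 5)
Your first paragraph (invertibility of $d\Psi_{p_*}$ and $d\Psi_{\bar p_*}$, hence $\psideg(p_*)=\sign(d\Psi_{p_*})=\pm1$ via Remark \ref{regular zero of Psi}) is correct and is exactly the paper's first step, which delegates the verification to an external lemma. Your last paragraph (Additivity) is also fine. The gap is in the middle step, precisely where you yourself flag ``the main obstacle'': the symmetry $\Psi\circ A=-\Psi$ cannot, by itself, decide whether the two signs agree. From $\Psi=(-I_H)\circ\Psi\circ A$ one may define a second orientation $\o'$ of $\Psi$ by assigning to each $q$ the composite orientation of $(-I_H)\circ d\Psi_{A(q)}\circ dA_q$, with $-I_H$ and $dA_q$ naturally oriented and $d\Psi_{A(q)}$ carrying the given orientation at $A(q)$; by Remark \ref{composition of orientations} this is an orientation, and since $\R\per\S$ is connected either $\o'$ coincides with the given orientation $\o$ everywhere or it is the opposite one everywhere. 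Your conclusion $\sign(d\Psi_{\bar p_*})=\sign(d\Psi_{p_*})$ holds in the first case and \emph{fails} in the second, and the identity $\Psi\circ A=-\Psi$ is compatible with both alternatives (applying the construction twice returns $\o$ in either case, $A$ being an involution). The sentence ``$\Psi\circ A=-\Psi$ forces the pulled-back orientation of $\Psi$ along $A$ to agree with the orientation of $\Psi$ composed with $-I_H$'' therefore asserts the very alternative that has to be decided; it is circular.

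Deciding the alternative requires comparing the continuously transported orientation at $\bar p_*$ with the composite one, i.e.\ an actual computation along a path joining $p_*$ to $\bar p_*$ inside $\R\per\S$ --- and that is exactly what the paper's proof does: it transports the orientation along a $\l_*$-meridian, writes $d\widetilde\Psi$ along the path in the block upper-triangular form of Remark \ref{canonical orientation of a map}, and checks that the resulting determinant $\sin\t(\sin\t+\b\cos\t)$ has the same (positive) sign at the two endpoints $\t=\pm\pi/2$. A finite-dimensional sanity check shows that your proposed cancellation is not merely unproven but false under the naive identifications: for $G=H=\R^k$ the restriction of $dA_{p_*}$ to $\R\per x_*^\perp$ has determinant $(-1)^{k-1}$ while $-I_H$ has determinant $(-1)^{k}$, and their product is $-1$, not $+1$. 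The theorem nevertheless holds because the continuously transported orientation of the cylinder at $\bar p_*$ differs from the naive identification of $T_{\bar p_*}(\R\per\S)$ with $T_{p_*}(\R\per\S)$ by exactly the compensating sign (already visible for $k=2$, where the positively oriented tangent vector to $S^1$ at $-x_*$ is the \emph{negative} of the one at $x_*$ under that identification). Your argument never produces this compensating factor, so the second paragraph does not close.
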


\begin{proof}
It is enough to prove that $\psideg(p) = \psideg(\bar p) = \pm 1$:
the last assertion follows from the Additivity Property of the degree.

Since $p_*$ is simple, the $\l_*$-eigensphere $\mathcal S_{\l_*}$ of \eqref{eigenvalue problem} consists of two antipodal points: $x_*$ and $-x_*$.
Both the tangent spaces of $\S$ at these points coincide with the $1$-codimensional subspace $x_*^\perp$ of $G$.
Thus, the tangent spaces of the cylinder $\R\per\S$ at the twin eigenpoints $p_*$ and $\bar p_*$ are equal to the $1$-codimensional subspace $\R\per x_*^\perp$ of the Hilbert space $\R\per G$.
The differentials $d\Psi_{p_*}$ and $d\Psi_{\bar p_*}$ (acting from $\R\per x_*^\perp$ to $H$) are given, respectively, by
\[
(\dot \l, \dot x) \mapsto T \dot x - \dot\l Cx_*
\quad
\text{and}
\quad
(\dot \l, \dot x) \mapsto T \dot x + \dot\l Cx_*,
\]
where $T$, as in Definition \ref{simple}, denotes the operator $L-\l_*C$.

As one can check (see, for example, \cite[Lemma 3.2]{BeCaFuPe-s4})), the fact that the eigenpoints $p_*$ and $\bar p_*$ are simple implies that the differentials $d\Psi_{p_*}$ and $d\Psi_{\bar p_*}$ are invertible.
Consequently, according to Remark \ref{regular zero of Psi}, the $\Psi$\hbox{-}degrees of $p_*$ and $\bar p_*$ coincide, respectively, with $\sign(d\Psi_{p_*})$ and $\sign(d\Psi_{\bar p_*})$, which are both non-zero.

Thus, it remains to prove that these two signs are equal, which means that the orientations of $\Psi$ at these points are both natural or both not natural.

For this purpose, it is convenient to fix an orientation of $\Psi$ at one of the two eigenpoints $p_*$ and $\bar p_*$ (for example by choosing the natural orientation of $d\Psi_{\bar p_*}$) and \emph{to transport it, continuously, along a curve, up to the other eigenpoint}.

A suitable curve is a \emph{$\l_*$-meridian}.
That is, a geodesic in $\R\per\S$ of the type
\[
\mathcal G = \big\{(\l_*,x) \in \R\per\S: x = \sin\t\, x_* + \cos\t\, x_e, \; \t \in [-\pi/2,\pi/2]\big\},
\]
where $x_e$ is an element of the \emph{equator} $\S\cap x_*^\perp$ of $\S$ (recall that $\dim G > 1$) and $\t$ may be regarded as a \emph{latitude}.

Having chosen $x_e$, and the consequent $\l_*$-meridian, we will ``observe'' the differential of $\Psi$, along $\mathcal G$, from the point of view of a self-map defined on a convenient ``flat space'';
namely, the Hilbert space $G_*\per\R\per\R$, where $G_*$ is the $2$-codimensional subspace $x_*^\perp \cap x_e^\perp$ of $G$, which is nonempty because of the assumption $\dim G > 1$, even if trivial when $\dim G = 2$.

We will ``observe'' the map $\Psi$ by means of a convenient composition $\widetilde \Psi$ = \hbox{$\s^{-1}\!\circ\Psi\circ\eta$}, where $\eta\colon W \to \R\per\S$ is a parametrization (i.e.\ the inverse of a chart) defined on an open subset $W$ of $G_*\per\R\per\R$ and $\s\colon G_*\per\R\per\R \to H$ is an invertible bounded linear operator (a global parametrization of $H$).

Let us define $\eta$, first.
Consider the open subset
\[
W = \big\{(y,\t,\l) \in B\per(-\pi,\pi)\per\R\big\}
\]
of $G_*\per\R\per\R$, where $B$ stands for the open unit ball of $G_*$, and let
$\eta \colon W \to \R\per\S$ be the map given by
\[
\eta(y,\t,\l) = \big(\l, y + \sqrt{1-\|y\|^2}(\sin\t\,x_* + \cos\t\,x_e)\big).
\]
Notice that, under $\eta$, the eigenpoints $\bar p_*$ and $p_*$ correspond to $\bar u_*=(0,-\pi/2, \l_*)$ and $u_*=(0,\pi/2,\l_*)$, respectively.
One can check that $\eta$ is a diffeomorphism onto an open subset of $\R\per\S$ containing the meridian $\mathcal G$.

We now define $\s$.
From Remark \ref{splitting of H} we get the splitting
\begin{equation}
\label{splitting}
H = T(x_*^\perp)\oplus\R Cx_* = (T(G_*)\oplus\R Tx_e)\oplus\R Cx_*.
\end{equation}
Thus, $H$ can be identified with $G_*\per\R\per\R$ by means of the isomorphism
\[
\s\colon G_*\per\R\per\R \to H, \quad (y,a,b) \mapsto Ty+aTx_e+bCx_*.
\]

We assume that $\eta$ and $\s^{-1}$ are naturally oriented and that $\widetilde \Psi$ has the composite orientation.
Therefore, recalling Corollary \ref{sign of composition},
\begin{equation}
\label{sign equality}
\sign(d\widetilde\Psi_{\bar u_*}) = \sign(d\Psi_{\bar p_*}) \quad \text{and} \quad \sign(d\widetilde\Psi_{u_*}) = \sign(d\Psi_{p_*}),
\end{equation}
whatever the orientation of $\Psi$.

Hence, it remains to prove that $\sign(d\widetilde\Psi_{\bar u_*}) = \sign(d\widetilde\Psi_{u_*})$, no matter what is the orientation of $\widetilde \Psi$.

To this purpose, consider the straight path $\g\colon [-\pi/2,\pi/2] \to W$ defined by $\g(\t) = (0,\t,\l_*)$.
This path joins $\g(-\pi/2) = \bar u_*$ with $\g(\pi/2) = u_*$,
therefore it is suitable for the continuous transport of the orientation of $\widetilde\Psi$ from $\bar u_*$ to $u_*$.
Notice that the image of the simple arc $\t \mapsto \eta(\g(\t))$ is the $\l_*$-meridian $\mathcal G$.

Taking into account that $\Psi(\l,x) = Tx - (\l-\l_*)Cx$ and that $Tx_* = 0$, given any $\t \in [-\pi/2,\pi/2]$, we get
\[
d(\Psi\circ\eta)_{\g(\t)}(\dot y,\dot\t,\dot\l) =
T\dot y - \dot\t\sin\t\, Tx_e - \dot\l\sin\t\,Cx_* - \dot\l\cos\t\,Cx_e.
\]
Therefore, recalling that $\s^{-1}$, being linear, coincides with its differential, we obtain
\[
d(\widetilde\Psi)_{\g(\t)}(\dot y,\dot\t,\dot\l) =
\s^{-1}\big(T\dot y - \dot\t\sin\t\, Tx_e - \dot\l\sin\t\,Cx_* - \dot\l\cos\t\,Cx_e\big).
\]
Since, according to the splitting \eqref{splitting}, $Cx_e$ can be written as $Ty_* + \a Tx_e + \b Cx_*$ for some $y_* \in G_*$ and $\a,\b \in \R$, we have
\[
d(\widetilde\Psi)_{\g(\t)}(\dot y,\dot\t,\dot\l) =
\big(\dot y, - \dot\t\sin\t, - \dot\l\sin\t\big) - \dot\l\cos\t\big(y_*,\a,\b\big),
\]
which can be represented as
\[
d\widetilde\Psi_{\g(\t)}(\dot y, \dot \t, \dot \l) =
\left(
\begin{array}{ccc}
I_{11} & 0 & -\cos\t\,y_*\\[2ex]
0 & -\sin\t & -\a\cos\t\\[2ex]
0 & 0 & -(\sin\t+\b\cos\t)
\end{array}
\right)
\left(
\begin{array}{c}
\dot y\\[2ex]
\dot\t \\[2ex]
\dot\l
\end{array}
\right),
\]
where $I_{11}$ is the identity on $G_*$.

Thus, the continuous map $\G\colon [-\pi/2,\pi/2] \to \A(G_*\per\R\per\R)$, given by $\t \mapsto d\widetilde\Psi_{\g(\t)}$, is in block matrix form as in Remark \ref{canonical orientation of a map}.
Consequently, up to an inversion of the orientation of $\Psi$, we may assume that $\G$ has the canonical orientation, which is such that
\[
\sign(d\widetilde\Psi_{\g(\t)}) = \sign\det(d\widetilde\Psi_{\g(\t)})
= \sign\big(\sin\t(\sin\t+\b\cos\t)\big).
\]
Recalling that $\bar u_* = \g(-\pi/2)$ and $u_* = \g(\pi/2)$, we finally obtain $\sign(d\widetilde\Psi_{\bar u_*}) = \sign(d\widetilde\Psi_{u_*})$, and the assertion ``$\psideg(p) = \psideg(\bar p) = \pm 1$'' follows from \eqref{sign equality}.
\end{proof}

\section{The perturbed eigenvalue problem and global continuation}
\label{Results 2}

Given, as before, two real Hilbert spaces $G$ and $H$, consider the problem
\begin{equation}
\label{perturbed eigenvalue problem}
\left\{
\begin{aligned}
&Lx + s N(x) = \l Cx\\
&x \in \S,
\end{aligned}\right.
\end{equation}
where $s, \l \in \R$, $L, C \in \L(G,H)$, $C$ is compact, and $N\colon \S \to H$ is a $C^1$ compact map defined on the unit sphere of $G$.
As in the unperturbed problem \eqref{eigenvalue problem}, we assume that $L - \l C$ is invertible for some $\l \in \R$.

\medskip
A triple $(s,\l,x) \in \R\per\R\per\S$ is a \emph{solution}
of \eqref{perturbed eigenvalue problem} if it satisfies the equation $Lx + s N(x) = \l Cx$.
The third element $x \in \S$ is said to be a \emph{unit eigenvector} corresponding to the \emph{eigenpair} $(s,\l)$.
The set of all the solutions of \eqref{perturbed eigenvalue problem} is denoted by $\Sigma$, while $\mathcal E$ stands for the subset of $\R^2$ of the eigenpairs of \eqref{perturbed eigenvalue problem}.

Observe that $\mathcal E$ is the projection of $\Sigma$ into the $s\l$-plane, and that the $s=0$ slice $\Sigma_0$ of $\Sigma$ coincides with the set $\mathcal S = \Psi^{-1}(0)$ of the eigenpoints of \eqref{eigenvalue problem}.

A solution of \eqref{perturbed eigenvalue problem} is said to be \emph{trivial} if it is of the type $(0,\l,x)$.
In this case $p=(\l,x)$ is the \emph{corresponding eigenpoint} (of the unperturbed problem \eqref{eigenvalue problem}).
When $p$ is simple, the solution $(0,\l,x)$ will be as well said to be \emph{simple}.
Therefore, any simple solution is trivial, but not viceversa.

\smallskip
Consider the $C^1$-map
\[
\Psi^+\colon \R\per\R\per\S \to H, \quad (s,\l,x) \mapsto \Psi(\l,x) + sN(x),
\]
where, we recall, $\Psi\colon \R\per\S \to H$ is defined by $\Psi(\l,x) = Lx - \l Cx$.
Notice that the zeros of $\Psi^+$ are the solutions of \eqref{perturbed eigenvalue problem}; that is, $\Sigma = (\Psi^+)^{-1}(0)$.

Since $\Psi$ is Fredholm of index zero, because of the compactness of $N$, any partial map $\Psi^+_s = \Psi^+(s, \cdot,\cdot)$ of $\Psi^+$ is a $\Phi_0$-map from $\R\per\S$ to $H$.
In fact, in Banach spaces, the differential at any point of the domain of a compact $C^1$-map is a compact linear operator, and this holds true also in Banach manifolds, due to the fact that they are locally diffeomorphic to open sets of Banach spaces.

Therefore, according to Definition \ref{extended Phi-zero-homotopy}, $\Psi^+$ is an extended $\Phi_0$-homotopy from $\R\per\S$ into $H$.

Due to the fact that the partial map $\Psi^+_0$ of $\Psi^+$ coincides with the oriented map $\Psi$, thanks to Proposition \ref{orientation transport}, the extended $\Phi_0$-homotopy $\Psi^+$ admits an orientation (a unique one) which is compatible with that of $\Psi$.
Therefore, from now on, $\Psi^+$ will be considered an oriented extended $\Phi_0$-homotopy.

\medskip
Since the set $\Sigma = (\Psi^+)^{-1}(0)$ has a distinguished (trivial) subset, namely $\{0\}\per\Sigma_0$, it makes sense to consider the notion of bifurcation point.
A trivial solution $q_* =(0,\l_*,x_*)$ of \eqref{perturbed eigenvalue problem} is a \emph{bifurcation point} provided that any neighborhood of $q_*$ contains nontrivial solutions.

A bifurcation point $q_*$ is said to be \emph{global} (in the sense of Rabinowitz \cite{Ra}) if there exists a connected set of nontrivial solutions whose closure contains $q_*$ and it is either unbounded or meets a bifurcation point $q^*$ different from $q_*$.

Particularly meaningful is the study of bifurcation points belonging to a set of trivial solutions of the type $\{0\}\per\{\l_*\}\per\mathcal S_{\l_*}$, whose eigensphere $\mathcal S_{\l_*}$ is \emph{nontrivial} (that is, with positive dimension).
Since, in this case, $0$ and $\l_*$ are given, one can simply say that a point $x_* \in \mathcal S_{\l_*}$, regarded as an alias of $q_* = (0,\l_*,x_*)$, is a \emph{bifurcation point} if so is $q_*$.

For a necessary condition and some sufficient conditions for a point $x_*$ of a nontrivial eigensphere to be a bifurcation point see \cite{ChFuPe1}.
Results regarding the existence of (global or non-global) bifurcation points belonging to even-dimensional eigenspheres can be found in \cite{BeCaFuPe-s1, BeCaFuPe-s2, BeCaFuPe-s3, BeCaFuPe-s4, BeCaFuPe-s5, ChFuPe2, ChFuPe3, ChFuPe5}.

\medskip
Theorem \ref{continuation 1} below, which is crucial for our main result (Theorem \ref{ex conjecture}), provides, in particular, a sufficient condition for an isolated subset of trivial solutions of \eqref{perturbed eigenvalue problem} to contain at least one bifurcation point.
To prove it, we need the following lemma of point-set topology, which is particularly suitable to our purposes and is obtained from general results by C.\ Kuratowski (see \cite{Ku}, Chapter $5$, Vol.~$2$).
For an interesting paper on connectivity theory we also recommend \cite{Alex}.

\begin{lemma}[\!\cite{FuPe3}\,]
\label{Whyburn}
Let $\K$ be a compact subset of a locally compact metric space $X$.
If every compact subset of $X$ containing $\K$ has nonempty boundary, then $X \setminus \K$ contains a connected set whose closure in $X$ is non-compact and intersects $\K$.
\end{lemma}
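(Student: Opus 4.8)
The plan is to deduce the lemma from the classical alternative of Kuratowski on the separation of disjoint closed sets in a compact Hausdorff space, after passing to the one-point compactification, and then to produce the required connected subset of $X\setminus\K$ by a boundary-bumping argument.

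First I would pass to the Alexandroff one-point compactification $X^*=X\cup\{\infty\}$, which is a compact Hausdorff space (recall that $X$, being metric, is Hausdorff, and it is locally compact by hypothesis), and in which $\K$ is a compact set with $\infty\notin\K$. The decisive preliminary observation is a reformulation of the hypothesis: a compact set $M\subseteq X$ with $\K\subseteq M$ has empty boundary in $X$ exactly when $M$ is open in $X$, hence open in $X^*$, while being compact it is also closed in $X^*$; thus such an $M$ is precisely a clopen subset of $X^*$ containing $\K$ and avoiding $\infty$. Therefore the assumption that no compact set containing $\K$ has empty boundary is equivalent to saying that $\K$ and $\{\infty\}$ cannot be separated by a clopen subset of $X^*$.

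Next I would invoke Kuratowski's separation theorem, valid in any compact Hausdorff space (see \cite{Ku}): for the two disjoint closed sets $\K$ and $\{\infty\}$ of $X^*$, either there is a continuum $D\subseteq X^*$ with $\infty\in D$ and $D\cap\K\neq\emptyset$, or $X^*$ splits into two disjoint clopen sets, one containing $\K$ and the other containing $\infty$. The second alternative is ruled out by the reformulation above, so a continuum $D$ of the first type exists. Within the continuum $D$ I would then apply the boundary-bumping theorem to the proper nonempty open subset $D\setminus\K$: the component $C$ of $D\setminus\K$ containing $\infty$ satisfies $\overline{C}^{D}\cap\K\neq\emptyset$. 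To land inside $X\setminus\K$, I would arrange — by replacing $D$ with a subcontinuum that is minimal (i.e.\ irreducible) among those joining $\infty$ to a point of $\K$, which exists by Zorn's lemma since a nested intersection of continua is a continuum — that deleting $\infty$ from the relevant connected set leaves it connected; the resulting set $C_0\subseteq X\setminus\K$ is then the one sought: its closure in $X$ meets $\K$ because $\overline{C}^{D}$ does, and it is non-compact because $\infty$ lies in its closure in $X^*$, which by the very definition of the neighbourhoods of $\infty$ means exactly that $\overline{C_0}^{X}$ is not compact.

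The main obstacle — and the reason the statement really rests on the ``general results of Kuratowski'' rather than being immediate — is this last step: after removing from a continuum the point $\infty$ together with the closed set $\K$, the remainder need not be connected, and $\infty$ may be an accumulation point of infinitely many of its components when the continuum at hand is not locally connected. Organizing the choice of the irreducible subcontinuum, and the successive applications of boundary bumping, so that connectedness survives both deletions is the delicate core of the proof; the compactification bookkeeping and the translation of the hypothesis are routine.
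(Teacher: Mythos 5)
The paper does not actually prove this lemma: it is imported verbatim from \cite{FuPe3} and attributed to general results of Kuratowski, so there is no in-paper argument to compare yours against. Judged on its own, your strategy is the standard and correct one for this statement: the translation of the hypothesis into ``no clopen subset of the one-point compactification $X^*$ contains $\K$ and misses $\infty$'' is exact, the appeal to the Kuratowski/Whyburn separation alternative in the compact Hausdorff space $X^*$ (note $X^*$ need not be metrizable, so you do need the Hausdorff version, as you say) correctly produces a continuum $D$ joining $\infty$ to $\K$, and boundary bumping plus the passage back to $X$ give the non-compactness and the intersection with $\K$ exactly as you describe.

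The one place where you stop short is the step you yourself flag as the ``delicate core'': you assert that an irreducible choice of $D$ makes the deletion of $\infty$ harmless, but you do not show it. It does work, and the argument is short enough to record. Take $D$ minimal (by Zorn, as you indicate) among subcontinua of $X^*$ containing $\infty$ and meeting $\K$, and let $C$ be the component of $\infty$ in $D\setminus\K$. By boundary bumping $\overline{C}$ (closure in $D$) is a subcontinuum containing $\infty$ and meeting $\K$, hence $\overline{C}=D$ by minimality; since $C$ is closed in $D\setminus\K$, this forces $C=D\setminus\K$, so $D\setminus\K$ is already connected. Now if $C\setminus\{\infty\}=U\cup V$ were a separation, then $D=\overline{C}=\overline{U}\cup\overline{V}\cup\{\infty\}$, so one of $\overline{U}$, $\overline{V}$ meets $D\cap\K$; say $\overline{U}$ does. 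Then $\overline{U\cup\{\infty\}}=\overline{U}\cup\{\infty\}$ is a subcontinuum containing $\infty$ and meeting $\K$, hence equals $D$ by minimality, which puts $V$ inside $\overline{U}\cap(C\setminus\{\infty\})=U$ and contradicts $U\cap V=\emptyset$. Thus $C_0=C\setminus\{\infty\}$ is connected, and your concluding remarks (its closure in $X$ meets $\K$, and is non-compact because $\infty$, not being isolated in the connected set $C$, lies in $\overline{C_0}$ computed in $X^*$) go through. With this inserted, your proof is complete.
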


Recall that, according to Notation \ref{slice}, given a subset $D$ of $\R\per\R\per\S$ and $s \in \R$, the symbol $D_s$ stands for the $s$-slice of $D$. Namely,
\[
D_s = \big\{(\l,x) \in \R\per\S: (s,\l,x) \in D\big\}.
\]

\begin{theorem}
\label{continuation 1}
Let $\O$ be an open subset of $\,\R\per\R\per\S$.
If $\deg(\Psi,\O_0,0) \not= 0$, then $\O$ has a connected set of nontrivial solutions whose closure in $\O$ is non-compact and contains at least one bifurcation point.
\end{theorem}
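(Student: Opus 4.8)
The plan is to apply Lemma~\ref{Whyburn} to the pair $\K$ and $X$ obtained from the solution set $\Sigma=(\Psi^+)^{-1}(0)$, restricted to $\O$, together with its trivial stratum. Concretely, I would set $X = \O \cap \Sigma$ with the metric inherited from $\R\per\R\per\S$, and $\K = \{0\}\per\Sigma_0 \cap X = \{0\}\per\O_0\cap(\Psi^+)^{-1}(0)$ — the set of trivial solutions lying in $\O$. The first step is to check the hypotheses of the lemma. Local compactness of $X$ follows because $\Psi^+$ is an extended $\Phi_0$-homotopy, hence each partial map is proper on bounded closed sets (property (4) of Fredholm operators together with the compactness of $N$), so $(\Psi^+)^{-1}(0)$ is locally compact. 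For $\K$ to be compact I must arrange that $\O_0\cap\Psi^{-1}(0)$ is compact; this is where the degree hypothesis first enters, since $\deg(\Psi,\O_0,0)$ is only defined when the triple $(\Psi,\O_0,0)$ is admissible, i.e.\ when $\O_0\cap\Psi^{-1}(0)$ is compact. Thus the hypothesis $\deg(\Psi,\O_0,0)\not=0$ tacitly provides compactness of $\K$.

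The heart of the argument is verifying the key hypothesis of Lemma~\ref{Whyburn}: every compact subset $\mathcal C$ of $X$ containing $\K$ has nonempty boundary in $X$. I would argue by contradiction. Suppose some compact $\mathcal C\subseteq X$ with $\K\subseteq\mathcal C$ is also open in $X$; then $\mathcal C$ is an isolated subset of $\Sigma$ inside $\O$, so there is an open set $W\subseteq\O$ with $W\cap\Sigma=\mathcal C$. Using compactness of $\mathcal C$ and the fact that $\mathcal C$ meets the slice $\{0\}\per\R\per\S$ exactly in $\K$, I can shrink $W$ to a box of the form $J\per V$ near $s=0$, with $V$ open in $\R\per\S$ and $J$ an open interval around $0$, such that $W_s\cap(\Psi^+_s)^{-1}(0)$ stays inside a fixed open set for $s\in J$ and, crucially, $\mathcal C\cap(\{s\}\per\R\per\S)=\emptyset$ for $s$ in a punctured neighborhood of $0$ (this is possible precisely because $\mathcal C$ is compact and disjoint from nearby nontrivial slices — if not, $\K$ would already contain bifurcation points and we would be done). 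Then the Generalized Homotopy Invariance Property applied to $\Psi^+$ on $W$ gives $\deg(\Psi^+_s,W_s,0)=\deg(\Psi^+_0,W_0,0)=\deg(\Psi,\O_0,0)\not=0$ for all $s\in J$, while for $s\not=0$ small we have $W_s\cap\Sigma_s=\emptyset$, forcing $\deg(\Psi^+_s,W_s,0)=0$ by the Existence Property — a contradiction.

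Once the hypotheses are in place, Lemma~\ref{Whyburn} yields a connected subset $\mathcal D$ of $X\setminus\K$ whose closure in $X$ is non-compact and meets $\K$. Since $X\setminus\K$ consists of solutions $(s,\l,x)$ with either $s\not=0$ or $(0,\l,x)\notin\O_0\cap\Psi^{-1}(0)$ — but the latter cannot happen for points of $X=\O\cap\Sigma$ with $s=0$ — the set $\mathcal D$ consists entirely of \emph{nontrivial} solutions. Its closure in $\O$ (equivalently in $X$, since $X$ is closed in $\O$) is non-compact, and it meets $\K$, i.e.\ it contains a trivial solution $q_*$. The final step is to observe that such a $q_*\in\K\cap\overline{\mathcal D}$ is a bifurcation point: every neighborhood of $q_*$ meets $\mathcal D$, hence contains nontrivial solutions.

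The step I expect to be the main obstacle is the contradiction argument in the middle paragraph — specifically, arranging the box $J\per V$ and ensuring that for $s$ near (but not equal to) $0$ the set $W_s$ contains no solutions, so that the degree jumps from a nonzero value at $s=0$ to zero. The delicate point is that a priori one only knows $\mathcal C\cap(\{0\}\per\R\per\S)=\K$; one must rule out the possibility that $\mathcal C$ accumulates, along a sequence $s_n\to 0$, onto nontrivial solutions clustering at $\K$. But if that happens, those accumulation points already lie in $\K$ and are manifestly bifurcation points, so the conclusion of the theorem holds outright and there is nothing to prove. Handling this dichotomy cleanly — either $\K$ already contains a bifurcation point, or we may assume $\mathcal C$ is locally trivial near $s=0$ and derive the degree contradiction — is the crux of the proof.
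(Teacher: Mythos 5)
Your setup is exactly right: the pair $X=\O\cap\Sigma$, $\K=\{0\}\per\bigl(\O_0\cap\Psi^{-1}(0)\bigr)$, the verification that $X$ is locally compact and that $\K$ is compact (via admissibility of the triple $(\Psi,\O_0,0)$), and the reduction to Lemma~\ref{Whyburn} all match the intended argument. The gap is in your middle paragraph, where you verify the boundary hypothesis of the lemma. You try to produce the contradiction by making $W_s$ solution-free for \emph{small} $s\neq 0$, and to do so you invoke a dichotomy whose first branch --- ``$\mathcal C$ accumulates on $\K$ along nontrivial solutions, so $\K$ contains a bifurcation point and we are done'' --- does not close the proof: the theorem asks for a \emph{connected} set of nontrivial solutions whose closure in $\O$ is \emph{non-compact}, and the mere existence of a bifurcation point in $\K$ yields neither of these; you would still need the hypothesis of Lemma~\ref{Whyburn} for that particular $\mathcal C$, and your argument does not supply it. A secondary problem is that after shrinking $W$ to a box $J\per V$ the zero set of $\Psi^+$ in the new open set is $\mathcal C\cap(J\per V)$, which is relatively open in $\mathcal C$ but need not be compact, so the Generalized Homotopy Invariance no longer applies as stated.

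Both difficulties disappear if you look at large $|s|$ instead of small $s$, which is what the paper does. Keep the original open set $W$ with $W\cap X=\mathcal C$ and apply the Generalized Homotopy Invariance over all of $\R$: the relevant solution set is exactly $\mathcal C$, compact by assumption, so $\deg(\Psi^+_s,W_s,0)$ is independent of $s\in\R$ and, by Excision at $s=0$, equals $\deg(\Psi,\O_0,0)\neq 0$. Since $\mathcal C$ is compact its projection onto the $s$-axis is bounded, so there is some $s_*$ with $\mathcal C_{s_*}=\emptyset$, whence $\deg(\Psi^+_{s_*},W_{s_*},0)=0$ by the Existence Property --- the desired contradiction, with no case distinction and no analysis near $s=0$.
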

\begin{proof}
Since, by definition, a trivial solution of \eqref{perturbed eigenvalue problem} is a bifurcation point if it is in the closure of the set of nontrivial solutions, the assertion is the same as that of Lemma \ref{Whyburn} provided that $X$ is the set of the solutions in $\O$ and $\K$ is its subset of the trivial ones.
Namely, $X = (\Psi^+)^{-1}(0) \cap \O$ and $\K = \{0\}\per X_0$, where $X_0$ is the $s=0$ slice of $X$, which coincides with $\Psi^{-1}(0)\cap \O_0 = \Sigma_0 \cap \O_0$.

Thus, it is enough to prove that the metric pair $(X,\K)$ satisfies the assumptions of Lemma \ref{Whyburn}.

Let us show first that $X$ is locally compact.
Recall that $\Psi\colon \R\per\S \to H$ is a Fredholm map of index zero.
Therefore, its extension $(s,\l,x) \mapsto \Psi(\l,x)$ is Fredholm of index one, being obtained by composing $\Psi$ with the projection $(s,\l,x) \mapsto (\l,x)$, which is a $\Phi_1$-map (recall the property about the index of the composition of Fredholm operators in Subsection \ref{Algebraic preliminaries}).
Since $\Psi^+$ is obtained by adding to this extension of $\Psi$ the compact $C^1$-map $(s,\l,x) \mapsto sN(x)$, we get that $\Psi^+$ is as well Fredholm of index one.
Therefore, $\Psi^+$, being Fredholm, is a locally proper map (see \cite{Smale}).
This implies that the set $\Sigma = (\Psi^+)^{-1}(0)$ is locally compact, and so is its relatively open subset $X$.
Moreover, $\K = \{0\}\per X_0$ is compact, since $X_0$ coincides with the set $\Psi^{-1}(0)\cap \O_0$, whose compactness is implicit in the assumption that $\deg(\Psi,\O_0,0)$ is defined.

It remains to prove that any compact subset of $X$ containing $\K$ has nonempty boundary in $X$.
Assume, by contradiction, that this is not the case.
Hence there exists a compact subset $D$ of $X$ containing $\K$ whose boundary, in $X$, is empty.
Therefore, $D$ is relatively open in $X$ and, consequently, there exists an open subset $W$ of $\O$ such that $X \cap W = D$.
Incidentally we observe that, according to Definition \ref{isolated set}, the compact set $D$ is isolated among the elements of $X$.

Notice that, since $W \subseteq \O$, one has $D = \big\{(\l,s,x) \in W\colon \Psi^+(\l,s,x) = 0\big\}$. Thus, according to the Generalized Homotopy Invariance Property, $\deg(\Psi^+_s, W_s,0)$ does not depend on $s \in \R$.

Because of the Excision Property, for $s = 0$ one has
\[
\deg(\Psi^+_0, W_0,0) = \deg(\Psi^+_0,\O_0,0).
\]
Therefore, since the partial map $\Psi^+_0$ of $\Psi^+$ coincides with $\Psi$, one gets
\[
\deg(\Psi^+_s, W_s,0) = \deg(\Psi,\O_0,0) \not= 0, \quad \forall s \in \R.
\]
Now, the compactness of $D$ implies that there exists $s_* \in \R$ such that the set $D_{s_*} = (\Psi^+_{s_*})^{-1}(0) \cap W_{s_*}$ is empty.
Consequently, because of the Existence Property one gets $\deg(\Psi^+_{s_*}, W_{s_*},0) = 0$, and the assertion follows from the contradiction.
\end{proof}

Corollary \ref{continuation 2} below provides a sufficient condition for 
the existence of bifurcation of a global branch emanating from a point in an isolated subset of trivial solutions.

In order to deduce it from Theorem \ref{continuation 1}, we need to show that the map $\Psi^+$ is more than locally proper.
Actually,
\begin{itemize}
\item
\emph{$\Psi^+$ is proper on any bounded and closed subset of its domain.}
\end{itemize}
To check this, observe that $\Psi^+$ is the sum of two maps:
one is the restriction $\hat L$ to the manifold $\R\per\R\per\S$ of the linear operator
\[
\bar L\colon \R\per\R\per G \to F, \quad (s,\l,x) \mapsto Lx,
\]
which is Fredholm of index two; the other one is the map $(s,\l,x) \mapsto sN(x) - \l Cx$, which sends bounded sets into relatively compact sets.
The linear operator $\bar L$, being Fredholm, is proper on bounded and closed subsets of its domain.
Therefore, the same property is inherited by $\hat L$ to the closed subset $\R\per\R\per\S$ of $\R\per\R\per G$.
One can check that this property is preserved by adding to $\hat L$ a compact map.

\medskip
From Theorem \ref{continuation 1} we get a sufficient condition for the existence of a global bifurcation point.
Recall that the slice $\Sigma_0$ of the set $\Sigma = (\Psi^+)^{-1}(0)$ of the solutions of \eqref{perturbed eigenvalue problem} coincides with the set $\mathcal S = \Psi^{-1}(0)$ of the eigenpoints of \eqref{eigenvalue problem}.

\begin{corollary}
\label{continuation 2}
Let $\K$ be an isolated subset of $\Sigma_0$ such that $\psideg(\K) \not= 0$.
Then, there exists a connected set of nontrivial solutions of \eqref{perturbed eigenvalue problem} whose closure contains a bifurcation point $q_* \in \{0\}\per \K$ and it is either unbounded or encounters a bifurcation point $q^* \notin \{0\}\per \K$.
Consequently, $q_*$ is a global bifurcation point.
\end{corollary}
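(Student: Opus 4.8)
The plan is to deduce the statement from Theorem~\ref{continuation 1} applied to a judiciously chosen open set $\O\subseteq\R\per\R\per\S$. The key point is that Theorem~\ref{continuation 1} only produces a connected set of nontrivial solutions whose closure \emph{in $\O$} is non-compact; to convert this into the global dichotomy (unbounded, or meeting a further bifurcation point off $\{0\}\per\K$), $\O$ must be designed so that the part of $\R\per\R\per\S$ it omits meets the trivial solutions only away from $\K$. Concretely, I would first fix an isolating neighborhood $U$ of $\K$ in $\R\per\S$ with $\deg(\Psi,U,0)=\psideg(\K)\neq 0$. By Definition~\ref{isolated set}, $\K$ is relatively open in the closed set $\Sigma_0=\Psi^{-1}(0)$, so $\Sigma_0\setminus\K$ is closed in $\R\per\S$ and hence $\{0\}\per(\Sigma_0\setminus\K)$ is closed in $\R\per\R\per\S$. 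I would therefore set
\[
\O=(\R\per\R\per\S)\setminus\big(\{0\}\per(\Sigma_0\setminus\K)\big),
\]
an open set whose $0$-slice is $\O_0=(\R\per\S)\setminus(\Sigma_0\setminus\K)\supseteq U$ and for which $\Psi^{-1}(0)\cap\O_0=\K$. By the Excision Property, $\deg(\Psi,\O_0,0)=\deg(\Psi,U,0)=\psideg(\K)\neq 0$.

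Next I would invoke Theorem~\ref{continuation 1}: there is a connected set $\Gamma$ of nontrivial solutions of \eqref{perturbed eigenvalue problem}, contained in $\O$, whose closure in $\O$ is non-compact and contains a bifurcation point. Tracing the proof of that theorem, the bifurcation point produced lies in $\{0\}\per\big(\Psi^{-1}(0)\cap\O_0\big)=\{0\}\per\K$; call it $q_*$. Since $q_*$ belongs to the closure of $\Gamma$ in $\R\per\R\per\S$, this already yields the first half of the assertion.

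It remains to establish the dichotomy for $\overline\Gamma$, the closure of $\Gamma$ in $\R\per\R\per\S$, which is connected and contains $q_*$. If $\overline\Gamma$ is unbounded we are done. If $\overline\Gamma$ is bounded, I would use the property — established just before the statement of the corollary — that $\Psi^+$ is proper on bounded and closed subsets of its domain: as $\overline\Gamma$ is a bounded closed subset of $(\Psi^+)^{-1}(0)$, it is compact. Now the closure of $\Gamma$ in $\O$ equals $\overline\Gamma\cap\O$ and is non-compact, whereas $\overline\Gamma$ is compact; hence $\overline\Gamma\not\subseteq\O$, so $\overline\Gamma$ meets the complement $\{0\}\per(\Sigma_0\setminus\K)$ at a point $q^*=(0,\l^*,x^*)$ with $(\l^*,x^*)\in\Sigma_0\setminus\K$. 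Being a trivial solution lying in the closure of $\Gamma$, all of whose points are nontrivial solutions, $q^*$ is a bifurcation point, and $q^*\notin\{0\}\per\K$; this is the desired second alternative, and the concluding sentence follows at once from the definition of global bifurcation point. The main obstacle — and essentially the only delicate point — is precisely this design of $\O$: it must be large enough that $\psideg(\K)$ survives as $\deg(\Psi,\O_0,0)$ under excision, yet shaped so that any point of $(\R\per\R\per\S)\setminus\O$ that $\overline\Gamma$ can reach is automatically a bifurcation point off $\{0\}\per\K$; together with the compactness of $\overline\Gamma$ in the bounded case, which rests on the \emph{global} properness of $\Psi^+$ rather than mere local properness, this is what makes the ``escape through the complement'' argument go through.
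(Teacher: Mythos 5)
Your proposal is correct and follows essentially the same route as the paper: the same choice of $\O=(\R\per\R\per\S)\setminus\big(\{0\}\per(\Sigma_0\setminus\K)\big)$, the same appeal to Theorem~\ref{continuation 1} together with the excision/isolating-neighborhood identification $\deg(\Psi,\O_0,0)=\psideg(\K)$, and the same use of properness of $\Psi^+$ on bounded closed sets to force the closure, in the bounded case, to escape $\O$ into $\{0\}\per(\Sigma_0\setminus\K)$. The only cosmetic difference is that you ``trace the proof'' of Theorem~\ref{continuation 1} to locate $q_*$ in $\{0\}\per\K$, whereas the paper gets this directly from $\O\cap(\{0\}\per\Sigma_0)=\{0\}\per\K$; both are valid.
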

\begin{proof}
Let $\O$ be the open subset of $\R\per\R\per\S$ obtained by removing the closed set of the elements of $\{0\}\per\Sigma_0$ which are not in $\{0\}\per \K$ (recall that $\K$, according to Definition \ref{isolated set}, is relatively open in $\Sigma_0$).
Thus, $\O_0$ is an isolating neighborhood of $\K$ among $\Sigma_0$ and, consequently, $\deg(\Psi, \O_0, 0) = \psideg(\K) \not= 0$.
Because of Theorem~\ref{continuation 1}, there exists a connected set $\mathcal C$ of nontrivial solutions whose closure in $\O$, call it $\mathcal C^+$, is non-compact and contains at least one bifurcation point $q_*$, which, necessarily, belongs to $\O$.

It is enough to prove that $\mathcal C$ satisfies the first assertion: the second one is a consequence of the fact that the closure of a connected set is as well connected.
To this purpose, we need to show, first, that $q_* \in \{0\}\per \K$ and, after this, we may assume that $\mathcal C^+$ is bounded.

The point $q_*$ belongs to $\{0\}\per \K$, since, because of the definition of $\O$, one has $\O \cap (\{0\}\per\Sigma_0) = \{0\}\per \K$.

Assume now that ${\mathcal C^+}$ is bounded.
Then, so is the closure $\overline{\mathcal C}$ of ${\mathcal C}$ (in $\R\per\R\per\S$).
It remains to show that $\overline{\mathcal C}$ contains a trivial solution $q^*$ which does not belong to $\{0\}\per \K$.

Recall that $\Psi^+$ is proper on bounded closed subsets of its domain.
Therefore, the subset $\overline{\mathcal C}$ of $(\Psi^+)^{-1}(0)$ is compact.
Moreover, $\overline{\mathcal C}$ contains ${\mathcal C^+}$, which is not compact.
This implies that $\overline{\mathcal C}$ has at least one point $q^*$ which is not in ${\mathcal C^+}$.
The fact that $q^*$ is a bifurcation point not in $\{0\}\per \K$ follows from the definition of $\O$.
\end{proof}

\begin{corollary}
\label{compact component}
If $D$ is a compact component of $\Sigma$, then $\psideg(D_0) = 0$.
\end{corollary}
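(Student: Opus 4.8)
The plan is to argue by contradiction using the global continuation result of Corollary \ref{continuation 2}. Before invoking it one must check that the statement is even meaningful, i.e.\ that $D_0$ is an isolated subset of $\Sigma_0 = \Psi^{-1}(0)$, so that $\psideg(D_0)$ is defined. The set $D_0$ is compact, being (identifiable with) the closed subset $D\cap(\{0\}\per\R\per\S)$ of the compact set $D$. It is also relatively open in $\Sigma_0$: since the real eigenvalues of \eqref{eigenvalue problem} form a discrete set and each eigenset $\{\l\}\per\mathcal S_\l$ is a compact sphere, near any eigenpoint $\Sigma_0=\mathcal S$ coincides with a single such eigenset, so every component of $\Sigma_0$ is compact and relatively open. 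Any component of $\Sigma_0$ meeting $D_0$ is a connected subset of $\Sigma$ meeting the component $D$, hence lies in $D$, hence in $D_0$; thus $D_0$ is a union of components of $\Sigma_0$, and being compact it is a \emph{finite} such union, therefore relatively open. So $\psideg(D_0)$ is well defined (and if $D_0=\emptyset$ the claim is trivial).

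Now assume, for a contradiction, that $\psideg(D_0)\neq 0$. Applying Corollary \ref{continuation 2} with $\K=D_0$ produces a connected set $\mathcal C$ of nontrivial solutions of \eqref{perturbed eigenvalue problem} whose closure $\overline{\mathcal C}$ (in $\R\per\R\per\S$) contains a bifurcation point $q_*\in\{0\}\per D_0$ and is either unbounded or meets a bifurcation point $q^*\notin\{0\}\per D_0$. The key step is the absorption of this continuum by $D$: since $\{0\}\per D_0 = D\cap(\{0\}\per\R\per\S)\subseteq D$ we have $q_*\in D$, and since $\Sigma=(\Psi^+)^{-1}(0)$ is closed in $\R\per\R\per\S$, the set $\overline{\mathcal C}$ is a connected subset of $\Sigma$ meeting the connected component $D$; hence $\overline{\mathcal C}\subseteq D$.

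Finally, $D$ is compact, hence bounded, so $\overline{\mathcal C}\subseteq D$ is bounded and the first alternative is ruled out. Therefore $\overline{\mathcal C}$ must meet a bifurcation point $q^*\notin\{0\}\per D_0$. But $q^*\in\overline{\mathcal C}\subseteq D$, and a bifurcation point is by definition a trivial solution $(0,\l^*,x^*)$, so $q^*\in D\cap(\{0\}\per\R\per\S)=\{0\}\per D_0$, contradicting $q^*\notin\{0\}\per D_0$. Hence $\psideg(D_0)=0$. I expect the only delicate point to be the well-posedness step (that $D_0$ is isolated in $\Sigma_0$, which rests on the discreteness of the spectrum and the sphere structure of the eigensets); the remainder is the short topological observation that a compact component $D$ both absorbs the bifurcating continuum (being connected) and confines it to a bounded set all of whose trivial solutions already lie in $D_0$ (being compact). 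As an alternative one could bypass Corollary \ref{continuation 2} entirely: enclose $D$ in a compact, relatively clopen subset $A$ of $\Sigma$ lying inside a neighbourhood that meets $\Sigma_0$ only in $D_0$, apply the Generalized Homotopy Invariance Property to $\Psi^+$ on a box $W$ with $W\cap\Sigma=A$, let $|s|\to\infty$ (where the slice of $A$ is empty) to obtain degree $0$, and specialise to $s=0$ to identify that degree with $\psideg(D_0)$.
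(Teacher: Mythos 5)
Your proof is correct and follows essentially the same route as the paper's: verify that $D_0$ is an isolated subset of $\Sigma_0$ (you justify this more explicitly than the paper, which simply invokes the discreteness of the eigenvalues), then argue by contradiction via Corollary \ref{continuation 2}, absorbing $\overline{\mathcal C}$ into the component $D$ by connectedness and using compactness of $D$ to force the second alternative and derive the contradiction. No gaps.
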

\begin{proof}
Observe that $D_0$ is an isolated subset of $\Sigma_0$, due to the fact that the set of all the eigenvalues of \eqref{eigenvalue problem} is discrete.

Suppose, by contradiction, that $\psideg(D_0) \not=0$.
Then, Corollary \ref{continuation 2} applies ensuring the existence of a connected set $\mathcal C$ of nontrivial solutions of \eqref{perturbed eigenvalue problem} whose closure $\overline{\mathcal C}$, which is as well connected, contains at least two trivial solutions: one, say $q_*$, belonging to $\{0\}\per D_0$, and one, call it $q^*$, outside $\{0\}\per D_0$.

Since $q_*$ belongs to both the connected set $\overline{\mathcal C}$ and the component $D$, one gets $\overline{\mathcal C} \subseteq D$.
Therefore, $q^*$ belongs to $D$ and, consequently, being trivial, belongs as well to $\{0\}\per D_0$, which is a contradiction yielding the assertion.
\end{proof}

We are ready to prove our main achievement (Theorem \ref{ex conjecture}).
Its proof is based on previous results requiring the notion of degree for the oriented map $\Psi$ and the convenient hypothesis $\dim G > 1$.
In spite of this, its assertion is still valid when the space $G$ has dimension one, as Example \ref{example1} shows.

\begin{theorem}
\label{ex conjecture}
Let $(\l_*,x_*)$ be a simple eigenpoint of problem \eqref{eigenvalue problem}.
Then, in the set $\Sigma$ of the solutions of \eqref{perturbed eigenvalue problem}, the connected component containing $(0,\l_*,x_*)$ is either unbounded or includes a trivial solution $(0,\l^*,x^*)$ with $\l^* \not= \l_*$.
\end{theorem}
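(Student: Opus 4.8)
The plan is to deduce Theorem \ref{ex conjecture} almost immediately from the machinery already assembled, namely Theorem \ref{psi-degree simple eigenpoint} together with Corollary \ref{continuation 2} and Corollary \ref{compact component}. First I would observe that, since $(\l_*,x_*)$ is a simple eigenpoint, by Definition \ref{simple} the $\l_*$-slice $\mathcal S_{\l_*}$ consists precisely of the two antipodal points $x_*$ and $-x_*$, so the $\l_*$-eigenset $\K := \{\l_*\}\per\mathcal S_{\l_*}$ is a finite, hence compact, subset of $\Sigma_0 = \Psi^{-1}(0)$. Because the set of eigenvalues of \eqref{eigenvalue problem} is discrete, $\K$ is also relatively open in $\Sigma_0$, so $\K$ is an isolated subset of $\Sigma_0$ in the sense of Definition \ref{isolated set}. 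By Theorem \ref{psi-degree simple eigenpoint}, $\psideg(\K)=\psideg(p_*)+\psideg(\bar p_*)=\pm 2\neq 0$.

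Next I would invoke Corollary \ref{continuation 2} with this $\K$: it yields a connected set $\mathcal C$ of nontrivial solutions of \eqref{perturbed eigenvalue problem} whose closure contains a bifurcation point $q_*\in\{0\}\per\K$ and is either unbounded or encounters a bifurcation point $q^*\notin\{0\}\per\K$. Since $\{0\}\per\K=\{(0,\l_*,x_*),(0,\l_*,-x_*)\}$, the point $q_*$ is one of these two; call $\overline{\mathcal C}$ the closure of $\mathcal C$ in $\R\per\R\per\S$ and let $D$ be the connected component of $\Sigma$ containing $q_*$. Since $\overline{\mathcal C}$ is connected, contains $q_*$, and is contained in $\Sigma$, we get $\overline{\mathcal C}\subseteq D$.

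Now I would split into two cases according to the dichotomy of Corollary \ref{continuation 2}. If $\overline{\mathcal C}$ (hence $D$) is unbounded, the conclusion holds. If instead $\overline{\mathcal C}$ is bounded, then it encounters a bifurcation point $q^*=(0,\l^*,x^*)\notin\{0\}\per\K$; in particular $\l^*\neq\l_*$, since the only trivial solutions with $\l$-coordinate equal to $\l_*$ are the two points of $\{0\}\per\K$. As $q^*\in\overline{\mathcal C}\subseteq D$, the component $D$ includes a trivial solution $(0,\l^*,x^*)$ with $\l^*\neq\l_*$. Finally, to conclude that $D$ is the component of $\Sigma$ \emph{containing} $(0,\l_*,x_*)$ — and not merely the one containing whichever twin $q_*$ turned out to be — I would note that $(0,\l_*,x_*)$ and $(0,\l_*,-x_*)$ lie in the same component: indeed, were they in distinct components $D$ and $\bar D$, each would be a component of $\Sigma$ whose $s=0$ slice is a single point of $\K$ with nonzero $\Psi$-degree contribution, and applying the argument above to whichever twin we started from already forces that component to be unbounded or to reach a trivial solution with $\l\neq\l_*$; alternatively and more cleanly, one checks directly that the reflection $(s,\l,x)\mapsto(-s,\l,-x)$, combined with oddness considerations, need not help, so the cleanest route is simply to apply Corollary \ref{continuation 2} and observe that $\overline{\mathcal C}$ meets $\{0\}\per\K$ and is connected, hence lies in a single component $D$, and then to remark that if $D$ contained only one of the two twins then $D_0=\{x_*\}$ (say) would be a proper relatively clopen subset of $\K$ with $\psideg(D_0)=\psideg(p_*)=\pm1\neq0$, so Corollary \ref{continuation 2} applies again to this smaller $\K$ and the same dichotomy holds for $D$.

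The main obstacle, and the only genuinely delicate point, is the very last one: relating the component containing the specified point $(0,\l_*,x_*)$ to the component produced by the continuation corollary, which a priori only knows about \emph{some} point $q_*$ of the eigenset. This is handled by noting that Corollary \ref{continuation 2} is equally applicable to the singleton $\{(0,\l_*,x_*)\}$ viewed as an isolated subset of $\Sigma_0$ — it is isolated since $\mathcal S_{\l_*}=\{x_*,-x_*\}$ is discrete — and has $\psideg$ equal to $\psideg(p_*)=\pm1\neq0$ by Theorem \ref{psi-degree simple eigenpoint}; applying the corollary to this singleton directly produces a connected set of nontrivial solutions whose closure contains $(0,\l_*,x_*)$ and is either unbounded or meets a bifurcation point outside $\{(0,\l_*,x_*)\}$. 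The only remaining subtlety is excluding the possibility that this external bifurcation point is the twin $(0,\l_*,-x_*)$ with $\l^*=\l_*$; but if that happens then the component $D$ containing $(0,\l_*,x_*)$ also contains $(0,\l_*,-x_*)$, so $D_0\supseteq\{x_*,-x_*\}$, whence $D$ is not compact by Corollary \ref{compact component} (since $\psideg(D_0)=\psideg(\K)=\pm2\neq0$ would contradict compactness), i.e. $D$ is unbounded, and again the conclusion holds.
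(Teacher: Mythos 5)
Your proposal is correct in substance, but it reaches the conclusion by a longer route than the paper, whose proof is essentially three lines: assume the component $D$ of $\Sigma$ containing $(0,\l_*,x_*)$ is bounded; since $\Psi^+$ is proper on bounded closed subsets of its domain, $D$ is compact, so Corollary \ref{compact component} gives $\psideg(D_0)=0$; on the other hand, by Theorem \ref{psi-degree simple eigenpoint} and additivity, every nonempty subset of the two-point eigenset $\{\l_*\}\per\mathcal S_{\l_*}$ containing $p_*$ has $\Psi$-degree $\pm1$ or $\pm2$, so $D_0\not\subseteq\{\l_*\}\per\mathcal S_{\l_*}$, i.e.\ $D$ contains a trivial solution with $\l^*\neq\l_*$. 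This direct argument never invokes Corollary \ref{continuation 2} and never has to decide ``which twin'' a continuation branch emanates from --- the issue you rightly single out as the delicate point of your approach simply does not arise. Your detour through Corollary \ref{continuation 2} applied to the singleton $\{p_*\}$ does work, because in the only troublesome sub-case (the branch closes up on the twin $(0,\l_*,-x_*)$) you fall back on exactly the paper's mechanism. One imprecision there: you assert $\psideg(D_0)=\psideg(\K)$, which is not automatic, since $D_0$ could strictly contain $\K$; you must first split off the case $D_0\not\subseteq\K$ (which already yields the desired trivial solution with $\l^*\neq\l_*$) before concluding $D_0=\K$ and reading off the nonzero degree. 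With that one-line repair your argument is complete, but it repackages the paper's two ingredients (Corollary \ref{compact component} and Theorem \ref{psi-degree simple eigenpoint}) inside an application of the global continuation corollary that the paper shows to be unnecessary.
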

\begin{proof}
We may assume that the connected component $D$ of $\Sigma$ containing $(0,\l_*,x_*)$ is bounded, and we need to show that its slice $D_0$ is not contained in $\{\l_*\}\per{\mathcal S_{\l_*}}$.

Recalling that $\Sigma = (\Psi^+)^{-1}(0)$ and that $\Psi^+$ is proper on bounded closed subsets of its domain, we get that $D$ is compact.
Then, Corollary \ref{compact component} implies $\psideg(D_0) = 0$ and, consequently, because of Theorem \ref{psi-degree simple eigenpoint}, $D_0 \not\subseteq \{\l_*\}\per{\mathcal S_{\l_*}}$.
\end{proof}

\section{Some illustrating examples and an application}
\label{Examples and application}

We give now three examples illustrating the assertion of Theorem \ref{ex conjecture}.
The last one shows also that, in this theorem, the assumption that the solution $(0,\l_*,x_*)$ is simple cannot be removed.

After the examples, we will give an application of Theorem \ref{ex conjecture} to a motion equation containing a nonlinearity like an air resistance force.

\subsection{Examples}
\label{examples}
The first example regards an exhaustive discussion of the solutions of problem \eqref{perturbed eigenvalue problem} in the case when $\dim G = 1$.
As we shall see, the assertion of Theorem \ref{ex conjecture} holds true also in this minimal dimension.

\begin{example} 
\label{example1}
Let $G=H=\R$ and consider the problem

\begin{equation}
\label{problem in E1}
\left\{
\begin{array}{ccc}
lx + s N(x) \eql \l cx,\\
|x| \eql 1,
\end{array}\right.
\end{equation}
in which $l$ and $c$ are two given real numbers, and $N$ is an arbitrary real function.

We assume $c \not=0$, so that the unperturbed problem (obtained by putting $s= 0$) has a unique eigenvalue, $\l_* = l/c$, and two corresponding twin eigenpoints:
\[
p = (\l_*,x_*) = (l/c,1), \quad \bar p = (\l_*,-x_*) = (l/c,-1),
\]
both simple.
We will interpret the assertion of Theorem \ref{ex conjecture} in this extreme situation.

For a solution $(s,\l,x)$ of problem \eqref{problem in E1} we have two possibilities: $x = 1$ or $x = -1$.

For $x = 1$ one has $\l = (l+sN(1))/c$.
Thus, the set of solutions of this type is given by the straight line
\[
\Sigma_+ = \big\{\big(s,(l+sN(1))/c,1\big) \in \R^3: s \in \R \big\}.
\]
Analogously, for $x = -1$ one gets
\[
\Sigma_- = \big\{\big(s,(l-sN(-1))/c,-1\big) \in \R^3: s \in \R \big\}.
\]
Therefore, the set $\Sigma$ of all the solutions of \eqref{problem in E1} is $\Sigma_+ \cup \Sigma_-$, and the assertion of Theorem \ref{ex conjecture} is satisfied for both the simple eigenpoints $(\l_*,x_*)$ and $(\l_*,-x_*)$.
\end{example}

The following example concerns a differential equation with an evident physical meaning, and the parameter $2s$, when positive, can be regarded as a frictional coefficient.
Its abstract formulation has infinitely many eigenpoints, all of them simple.
The set $\Sigma$ of the solution triples $(s,\l,x)$ is the union of infinitely many unbounded components, each of them corresponding to one and only one eigenpoint.

\begin{example} 
\label{example2}
Let us show how Theorem \ref{ex conjecture} agrees with the structure of the non-zero solutions of the following boundary value problem:
\begin{equation}
\label{problem in E2}
\left\{
\begin{array}{lccc}
x''(t) + 2s x'(t) + \l x(t) = 0,\\
x(0) = 0 = x(\pi).
\end{array}\right.
\end{equation}
For this purpose, we will interpret it as an abstract problem of the type \eqref{ex conjecture} by specifying what are here the spaces $G$ and $H$, the linear operators $L$ and $C$, and the map $N$.

Let $H^1(0,\pi)$ be the Hilbert space of the absolutely continuous real functions defined in $[0,\pi]$ with derivative in $L^2(0,\pi)$, and denote by $H^2(0,\pi)$ the Hilbert space of the $C^1$ real functions in $[0,\pi]$ with derivative in $H^1(0,\pi)$.

Clearly $H^1(0,\pi)$ is a subset of the Banach space $C[0,\pi]$, and the inclusion, we recall, is a compact operator.
Therefore the injection of $H^1(0,\pi)$ into $L^2(0,\pi)$ is as well compact, due to the bounded injection of $C[0,\pi]$ into $L^2(0,\pi)$.
Analogously, the inclusion of $H^2(0,\pi)$ into $C^1[0,\pi]$ is compact and the inclusion of $C^1[0,\pi]$ into $H^1[0,\pi]$ is continuous.

As a source space $G$ we take the $2$-codimensional closed subspace of $H^2(0,\pi)$ consisting of the functions $x$ satisfying the boundary condition $x(0) = 0 = x(\pi)$.
The target space $H$ is $L^2(0,\pi)$.

Observe that the second derivative $x \mapsto x''$, as a linear operator from $H^2(0,\pi)$ to $L^2(0,\pi)$, is bounded and Fredholm of index $2$, being surjective with $2$-dimensional kernel.
Therefore, its restriction $L \in \L(G,H)$ is a $\Phi_0$-operator (recall the property about the composition of Fredholm operators).

Here $C$ associates to any $x \in G$ the element $-x \in H$.
Thus, $C$ is a compact linear operator, since so is the injection of $H^2(0,\pi)$ into $L^2(0,\pi)$.
The map $N$ transforms $x \in G$ in $2x' \in H$ and, therefore, it is as well compact, as composition of a bounded linear operator into $H^1(0,\pi)$ with the compact injection into $L^2(0,\pi)$.

Among the infinitely many equivalent norms in $H^2(0,\pi)$ and, consequently, in $G$ we choose the one associated with the inner product
\[
\langle x,y \rangle = \frac{1}{\pi}\int_0^\pi\pb\big(x(t)y(t) + x''(t)y''(t)\big)\, dt.
\]

As in the previous sections, $\S$ denotes the unit sphere of $G$.
Since we are interested in the non-zero solutions of \eqref{problem in E2}, the linearity of $N$ justifies the condition $x \in \S$ in the following abstract formulation of our problem:
\begin{equation}
\label{abstract problem in E2}
\left\{
\begin{aligned}
&Lx + s N(x) = \l Cx\\
&x \in \S.
\end{aligned}\right.
\end{equation}

Elementary computations show that the eigenvalues of the unperturbed problem (obtained with $s=0$) are $\l_1 = 1, \l_2 = 4,\dots,\l_n=n^2,\dots$ and to any $\l_n$ corresponds the $1$-dimensional eigenspace $\R x_n = \Ker(L-\l_n C)$, with $x_n \in \S$ given~by
\[
x_n(t) = \sqrt{\frac{2}{1+n^4}}\sin(nt).
\]

Let us show that $p_n = (\l_n,x_n)$ and $\bar p_n = (\l_n,-x_n)$ are simple eigenpoints, according to Definition~\ref{simple}.
Since $C$ is compact, the operator $T_n = L - \l_n C$ is Fredholm of index zero.
Therefore, we need only to prove that $Cx_n$ does not belong to $T_n(G)$, which means that the equation $T_n(x) = Cx_n$ has no solutions in $G$.
In fact, there are no solutions of the resonant problem
\begin{equation*}
\label{no solution in E2}
\left\{
\begin{array}{lccc}
x''(t) + n^2 x(t) = \sin(nt),\\
x(0) = 0 = x(\pi).
\end{array}\right.
\end{equation*}

With standard computations one can prove that, given $s \in \R$, the differential equation $x''(t) + 2s x'(t) + \l x(t) = 0$ has a non-zero solution verifying the boundary condition $x(0) = 0 = x(\pi)$ if and only if $\l = n^2+s^2$, with $n \in \mathbb N$.
Therefore, the subset $\mathcal E$ of the $s\l$-plane of the eigenpairs of \eqref{abstract problem in E2} is composed of the disjoint union of infinitely many parabolas of equation $\l = n^2 + s^2$, $n \in \mathbb N$.
Moreover, given $(s,n^2+s^2) \in \mathcal E$, any solution in $G$ of the differential equation
\begin{equation*}
\label{problem in E2 with eigenpair}
x''(t) + 2s x'(t) + (s^2+n^2)x(t) = 0
\end{equation*}
belongs to the straight line $\R x_{s,n}$, where $x_{s,n}(t) = \exp(-st)\sin(nt)$.

As a consequence of this, given any eigenpoint $p_n = (\l_n, x_n)$, the connected component, in $\Sigma$, containing the corresponding trivial solution $(0,\l_n, x_n)$ is the unbounded curve
\[
\big\{(s,s^2+n^2,x_{s,n}/\|x_{s,n}\|): s \in \R \big\}.
\]
Obviously, for the twin eigenpoint $\bar p_n = (\l_n,-x_n)$, one gets
\[
\big\{(s,s^2+n^2,-x_{s,n}/\|x_{s,n}\|): s \in \R \big\}.
\]
In conclusion, for any eigenpoint the assertion of Theorem \ref{ex conjecture} is verified.
\end{example}

\medskip
Example \ref{example3} below has already been considered in \cite{BeCaFuPe-s4} in relation to the conjecture formulated there and solved by Theorem \ref{ex conjecture} above.
It concerns a system of two ordinary differential equations with periodic boundary conditions, and the set $\Sigma$ of its solutions $(s,\l,x)$ has a component which is diffeomorphic to a circle and contains exactly four trivial solutions, all of them simple.
These four solutions are associated with two eigenvalues of the unperturbed problem: a pair of twins for each eigenvalue.
The other components of $\Sigma$ are infinitely many $1$-dimensional spheres (geometric circles).
The projection of each of them into the $s\l$-plane is a singleton $\{(0,\l)\}$, with $\l$ an eigenvalue of the unperturbed problem.

\begin{example}
\label{example3}
We are interested in the non-zero solutions of the following system of coupled differential equations with $2\pi$-periodic boundary conditions:
\begin{equation}
\label{system 3 in E3}
\begin{cases}
x'_1(t) + x_1(t) - s x_1(t) = \l x_2(t),\\
x'_2(t) - x_2(t) - s x_2(t) = -\l x_1(t),\\
x_1(0)=x_1(2\pi), \; x_2(0)= x_2(2\pi).
\end{cases}
\end{equation}
As in Example \ref{example2}, we interpret our problem in the abstract form
\begin{equation}
\label{abstract problem in E3}
\left\{
\begin{aligned}
&Lx + s N(x) = \l Cx\\
&x \in \S,
\end{aligned}\right.
\end{equation}
where $L$, $C$ and $N$ are operators to be defined below, together with the source and the target spaces $G$ and $H$.

Let $H^1((0,2\pi),\R^2)$ be the Hilbert space of the absolutely continuous functions
$
x=(x_1,x_2)\colon [0,2\pi] \to \R^2
$
with derivative in $L^2((0,2\pi),\R^2)$.

We take as $G$ the closed subspace of $H^1((0,2\pi),\R^2)$ of the functions satisfying the periodic condition $x(0)=x(2\pi)$, and as $H$ the space $L^2((0,2\pi),\R^2)$.
Observe that $G$ has codimension $2$ in $H^1((0,2\pi),\R^2)$.
Therefore, the operator $L \colon G \to H$, given by $(x_1,x_2) \mapsto (x_1'+x_1,x_2'-x_2)$, is Fredholm of index zero.

The operators $N$ and $C$, given by $(x_1,x_2) \mapsto (-x_1,-x_2)$ and $(x_1,x_2) \mapsto (x_2,-x_1)$ respectively, are compact, since so is the injection
\[
H^1((0,2\pi),\R^2) \hookrightarrow L^2((0,2\pi),\R^2).
\]

The norm in $G$ is the one associated with the inner product
\[
\langle x,y \rangle_1 = \frac{1}{2\pi}\int_0\sp{2\pi}\big(x(t)\cdot y(t) + x'(t)\cdot y'(t)\big)\, dt,
\]
where, given two vectors $a=(a_1,a_2)$ and $b=(b_1,b_2)$ in $\R^2$, $a \cdot b$ denotes the standard dot product.
As in the previous sections, $\S$ is the unit sphere in $G$.

The eigenvalues of the unperturbed problem (obtained by putting $s=0$ in \eqref{abstract problem in E3}) are $\l = \pm \sqrt{1+n^2},\; n = 0,1,2,\dots$ and the set $\mathcal E$ of the eigenpairs of \eqref{abstract problem in E3} is the disjoint union of two sets: the circle
\[
\mathcal C = \big\{(s,\l) \in \R^2: s^2+\l^2=1\big\}
\]
and the isolated eigenpairs
\[
\big\{(0,\l) \in \R^2: \l = \pm \sqrt{1+n^2},\; n = 1,2,3,\dots \big\}.
\]
Notice in fact that, if $(s,\l,x)$ satisfies \eqref{system 3 in E3} with $s\neq 0$, 
then $x$ must be constant.
Thus, the non-zero solutions of the periodic boundary value problem \eqref{system 3 in E3}
are of two types: the constant ones, corresponding to the eigenpairs of the circle $\mathcal C$, and the oscillating ones associated with the isolated eigenpairs.

\medskip
Let us examine first the case $(s,\l) \in \mathcal C$.
One has $(s,\l)=(\cos\t, \sin\t)$, with $\t \in [0,2\pi]$, and the kernel of the linear operator
\[
L + (\cos\t)N -(\sin\t)C \in \L(G,H)
\]
is the straight line $\R x^\t$, where $x^\t \in \S$ is the constant function
\[
x^\t \colon [0,2\pi] \to \R^2, \quad t \mapsto (\cos(\t/2),\sin(\t/2)).
\]
Therefore, the connected component $D$ of $\Sigma$ containing the trivial solution
\[
q_* = (0,\l_*,x_*) =(0,1,x^{\pi/2})
\]
of \eqref{abstract problem in E3} is diffeomorphic to a circle, as its parametrization
\[
\t \in [0,4\pi] \mapsto (\cos\t,\sin\t,x^\t) \in \R\times\R\times\S
\]
shows.
This component contains four trivial solutions: two of them associated with the eigenvalue $\l_*=1$ and the others with $\l^*=-1$.
They are all simple solutions, and the component $D$ agrees with the statement of Theorem \ref{ex conjecture}.

Incidentally, we observe that the projection of $D$ onto the circle $\mathcal C$ is a double covering map, and the above parametrization of $D$ is just a lifting of the map $\t \mapsto (\cos\t,\sin\t) \in \mathcal C$, $\t \in [0,4\pi]$.

\smallskip
Let us consider now the isolated eigenpairs.
That is, the ones having $|\l| > 1$.
In this case \eqref{system 3 in E3} admits non-zero solutions if and only if $s=0$ and $\l = \pm\sqrt{1+n^2}$, with $n = 1,2,3,\dots$
More precisely, these solutions are oscillating and, given any isolated eigenpair $(0,\l_*)$, the corresponding solutions of \eqref{system 3 in E3}, plus the zero one, form a two-dimensional subspace of $H^1((0,2\pi),\R^2)$.
This implies that the eigensphere $\mathcal S_{\l_*}$ of the unperturbed problem is the geometric circle $\Ker(L-\l_* C) \cap \S$.
Therefore, if $x_*$ is any element of this circle, the connected component in $\Sigma$ containing the corresponding trivial solution $(0,\l_*,x_*)$ does not satisfy the assertion of Theorem \ref{ex conjecture}.
Thus, the assumption that the eigenpoint $(\l_*,x_*)$ is simple cannot be removed.
\end{example} 

\subsection{An application}
\label{an application}
We close by showing how both Theorem \ref{ex conjecture} and the well-known notion of winding number allow us to deduce theoretically, without explicitly solving the differential equation, that the structure of set $\Sigma$ of solutions $(s,\l,x)$ of the nonlinear boundary value problem
\begin{equation*}
\label{air resistance}
\left\{
\begin{array}{lccc}
x''(t) + s g(x'(t)) + \l x(t) = 0,\\
x(0) = 0 = x(\pi),\; x \in \S
\end{array}\right.
\end{equation*}
is essentially the same as in Example \ref{example2}.
Here $g\colon \R \to \R$ is an increasing odd $C^1$-function, as is the classical air resistance force $g(v) = v|v|$, and the sphere $\S$ is as in Example \ref{example2}.
The parameter $s$, when positive, may be regarded as a frictional coefficient.

The problem can be rewritten in the abstract form as follows:
\begin{equation}
\label{abstract problem in A1}
\left\{
\begin{aligned}
&Lx + s N(x) = \l Cx\\
&x \in \S.
\end{aligned}\right.
\end{equation}
The spaces $G$ and $H$ are the same as in Example \ref{example2}, and so are the operators $L$ and $C$.
The compact map $N\colon G \to H$ sends $x$ into the function $N(x)\colon t \mapsto g(x'(t))$.
It is not difficult to prove that $N$ is $C^1$ and its Fr\'echet differential at $x \in G$ is given by $dN_x(h)\colon t \mapsto g'(x'(t))h'(t)$.

The unperturbed problem is the same as in Example \ref{example2}.
Therefore, its eigenvalues are $\l_1 = 1, \l_2 = 4,\dots,\l_n=n^2,\dots$
They are all simple and, consequently, each of them corresponds to a pair of isolated unit eigenpoints.

We will prove that the set $\Sigma$ of the solutions $(s,\l,x)$ of \eqref{abstract problem in A1} contains infinitely many unbounded components, each of them corresponding to one and only one eigenpoint.

\medskip
Let $S^1$ denote the unit circle of $\C$ and let $\mathrm{w}\colon C(S^1) \to \Z$ stand for the winding number function, defined on the set of the continuous maps from $S^1$ into itself.
Recall that, given $\g \in C(S^1)$, $\mathrm w(\g)$ is the same as the Brouwer degree of $\g$ and, speaking loosely, denotes the number of times that $\g$ travels counterclockwise around the origin of $\C$, and it is negative if the curve travels clockwise.

Call $\wj$ the integer valued function that to any non-zero solution $x$ of the parametrized differential equation
\begin{equation}
\label{differential equation}
x''(t) + s g(x'(t)) + \l x(t) = 0,
\end{equation}
depending on $s,\l \in \R$, assigns the winding number $\wj(x)$ of the closed curve $\j(x)\in C(S^1)$ defined by
\[
z = e^{i\t} \mapsto \frac{\big(x'(\t/2) + i x'(0)x(\t/2)\big)^2}{x'(\t/2)^2+x'(0)^2x(\t/2)^2}, \quad \t \in [0,2\pi].
\]
Notice that, given any non-zero solution $x$ of \eqref{differential equation}, $\j(x)$ is well defined, since $x(t)$ and $x'(t)$ cannot be simultaneously zero, due to the uniqueness of the Cauchy problem.
Observe also that $\j(x)$ is a closed curve, since both the endpoints coincide with $1 \in \C$.

It is convenient to extend the map $x \mapsto \j(x)$ to the symmetric set of all the functions $x \in G$ having the property that $x(t)^2+x'(t)^2 > 0$ for all $t \in [0,\pi]$.
We denote this set by $\X$ and we observe that it is open, because of the bounded inclusions $H^2(0,\pi) \hookrightarrow C^1[0,\pi]$ and $H^1(0,\pi) \hookrightarrow C[0,\pi]$.

One can check that, for example, if $x(t) = \sin(nt)$ with $n \in \Z$, then $\j(x)$ is the map $z \mapsto z^n$, whose winding number is $n$.

One can also check that, if $a$ is a positive constant and $x \in \X$, then $\j(x)$ and $\j(ax)$ are homotopic, therefore they have the same winding number.
Moreover, if $x \in \X$, then $\j(-x) = 1/\j(x)$, which is the same as the conjugate map $\overline{\j(x)}$ of $\j(x)$.
Therefore, the winding numbers of $\j(x)$ and $\j(-x)$ are opposite each other, and this happens for the two unit eigenvectors corresponding to any eigenvalue $\l_n = n^2$ of our problem.
In fact, this number is $n$ for the unit eigenfunction
\[
x_n(t) = \sqrt{\frac{2}{1+n^4}}\sin(nt),
\]
and $-n$ for the opposite one.

Observe that, due to the fact that $\X$ is open in $G$, if two functions of $\X$ are sufficiently close, then the segment joining them lies in $\X$.
Therefore, the corresponding two images under $\j\colon \X \to C(S^1)$ are homotopic and, consequently, they have the same winding number.
Thus, the integer valued function $\wj\colon \X \to \Z$ is locally constant.

Since the projection map $\mathrm p\colon \Sigma \to \X$ that to any solution $(s,\l,x)$ of \eqref{abstract problem in A1} assigns the function $x$ is continuous, we have the following

\begin{remark}
\label{locally constant}
The map $\mathrm{wjp}\colon \Sigma \to \Z$ that to any solution $(s,\l,x)$ of \eqref{abstract problem in A1} assigns the winding number of the closed curve $\j(x)$ is locally constant.
\end{remark}

Let $q_* = (0,\l_*,x_*)$ be any trivial solution of \eqref{abstract problem in A1}.
We want to prove that the connected component $D_*$ of $\Sigma$ containing $q_*$ is unbounded and does not meet other trivial solutions.

To this purpose, observe first that, since $D_*$ is connected, Remark \ref{locally constant} implies $\mathrm{wjp}(q) = \mathrm{wjp}(q_*)$ for all $q \in D_*$.
In particular, if $\l_* = \l_n = n^2$, then $\mathrm{wjp}(q_*)$ is $n$ or $-n$, depending on whether $x_*$ is the above function $x_n$ or its opposite.
Thus, $D_*$ does not contain trivial solutions different from $q_*$, consequence of the fact that the function that to any trivial solution $q$ assigns the integer $\mathrm{wjp}(q)$ is injective.

Finally, from Theorem \ref{ex conjecture} we get that $D_*$ is unbounded, since otherwise $D_*$ would contain a trivial solution different from $q_*$.



\end{document}